\documentclass{amsart}
\usepackage{youngtab}
\usepackage{graphicx,tikz}
\usepackage{mathrsfs,hyperref}
\usepackage{cancel, fullpage}

\newtheorem{theorem}{Theorem}[section]

\newtheorem{conjecture}[theorem]{Conjecture}

\newtheorem{lemma}[theorem]{Lemma}
\newtheorem{proposition}[theorem]{Proposition}
\newtheorem{corollary}[theorem]{Corollary}

\theoremstyle{definition}
\newtheorem{definition}[theorem]{Definition}

\newtheorem{remark}[theorem]{Remark}

\title{The transition matrix between the Specht and web bases is unipotent with additional vanishing entries}

\author{Heather M. Russell}
\address{Department of Mathematics and Computer Science, University of Richmond, 28 Westhampton Way, Richmond, VA 23173 U.S.A.}
\email{hrussell@richmond.edu}

\author{Julianna S. Tymoczko}
\address{Department of Mathematics and Statistics, Smith College, Northampton, MA 01063 U.S.A.}
\email{jtymoczko@smith.edu}

\begin{document}

\maketitle

\begin{abstract}
We compare two important bases of an irreducible representation of the symmetric group: the web basis and the Specht basis.  The web basis has its roots in the Temperley-Lieb algebra and knot-theoretic considerations.  The Specht basis is a classic algebraic and combinatorial construction of symmetric group representations which arises in this context through the geometry of varieties called Springer fibers.  We describe a graph that encapsulates combinatorial relations between each of these bases, prove that there is a unique way (up to scaling) to map the Specht basis into the web representation, and use this to recover a result of Garsia-McLarnan that the transition matrix between the Specht and web bases is upper-triangular with ones along the diagonal.  We then strengthen their result to prove vanishing of certain additional entries unless a nesting condition on webs is satisfied. In fact we conjecture that the entries of the transition matrix are nonnegative and are nonzero precisely when certain directed paths exist in the web graph.
\end{abstract}

\section{Introduction}

In this paper we study two closely-related pairs of objects: 1) graphs equipped with an edge-labeling that represents the action of certain permutations, and 2) natural bases for representations of the symmetric group.  Using properties of the graphs, we prove that the bases are related by a transition matrix that is upper-triangular with ones along the diagonal.  Our presentation is combinatorial, though the underlying motivation is geometric and representation-theoretic (see below, or for more detail \cite{cautis2008knot, FKK, khovanov2004crossingless, SCHÄFER_2012, Stroppel-Webster}).

The vertices of the graphs we study are classical combinatorial objects.  In one graph, the vertices are {\em standard Young tableaux} of shape $(n,n)$: in other words they are ways to fill a $2 \times n$ grid with the integers $1,2,\ldots,2n$ so that numbers increase left-to-right in each row and top-to-bottom in each column.  Standard Young tableaux are fundamental objects not just in combinatorics but also in representation theory and  geometry (see, e.g., Fulton's book for an overview \cite{MR1464693}).  They arise in our context as a geometrically-natural characterization of the cells in a topological decomposition of a family of varieties called {\em Springer fibers} \cite{MR1994220, MR2869443, MR2801314, wehrli2009remark}.  

In the second graph, the vertices are {\em webs}. Webs are drawn as arcs connecting pairs of integers $1 \leq i < j \leq 2n$ on the number line.  One other name is {\em noncrossing matchings}; as this suggests, these arcs must not intersect each other. In the literature, the webs we have just described are also called $\mathfrak{sl}_2$ webs or Temperley-Lieb diagrams, and have been studied extensively \cite{jones1983index, kauffman1987state, lickorish1992calculations, RTW}.  In general, there are webs for each $\mathfrak{sl}_k$ that diagrammatically encode the representation theory of $U(\mathfrak{sl}_k)$ and its quantum deformation (cf. \cite{cautis2014webs}). In the quantum context, webs give rise to knot and link invariants including the Jones polynomial \cite{jones1985polynomial, reshetikhin1991invariants}. Webs for other Lie types have also been studied \cite{K}.

This paper focuses on the algebraic applications of webs and tableaux rather than the geometric applications.  Consider webs as a formal basis of a complex vector space.  This vector space carries a natural $S_{2n}$-action coming from the category of $U_q(\mathfrak{sl}_2)$ representations (described further below).  The resulting web representation is irreducible and in fact is the same as the irreducible $S_{2n}$-representation on the classic Specht module of shape $(n,n)$, whose basis of Specht vectors is naturally indexed by the standard Young tableaux of shape $(n,n)$ \cite{PPR, MR2869443, MR2801314}.  

Relating different bases of symmetric group representations is an extremely fertile field with a long tradition in symmetric function theory \cite[Sections I.6 and III.6]{MR1354144}. More recently Kazhdan-Lusztig and others compared the Springer basis to the Kazhdan-Lusztig basis, and in fact the Kazhdan-Lusztig polynomials encode the transition matrix between those bases \cite{MR1994220, MR560412, MR597198}. Garsia-McLarnan relate the Kazhdan-Lusztig and Specht bases, showing the change-of-basis matrix with respect to a certain ordering of basis elements is upper-triangular with ones along the diagonal \cite[Theorem 5.3]{garsia1988relations}. In the context of this paper, the Kazhdan-Lusztig and web bases coincide \cite{frenkel1998kazhdan}, but this is not true in general \cite{housley2015robinson}. Webs in our context also coincide with the dual canonical bases for certain tensor products of $U_q(\mathfrak{sl_2})$ representations, but again this is not true of webs for $\mathfrak{sl}_k$ when $k\geq 3$ \cite{MR1446615, KK}.  

This paper asks: how are the web basis for $\mathfrak{sl}_2$ and the Specht basis related? This question has been indirectly addressed via the study of the Kazhdan-Lusztig and dual canonical bases and their relationships to the Specht and similar bases \cite{garsia1988relations, rhoades2011bitableaux}. The perspective of webs gives us new information about change-of-basis coefficients  in the $\mathfrak{sl}_2$ case and provides a foundation for studying more general web and Specht bases.

To relate these bases, we construct a web graph and a tableau graph with vertices of webs and standard Young tableaux respectively.  Two standard tableaux have an edge between them in the tableau graph if one can be obtained from the other by exchanging $i$ and $i+1$.  This graph also appears in geometry, where it encodes information about the components of Springer fibers \cite[Definitions 6.6 and 6.7]{pagnon2007adjacency} and in mathematical biology, where it encodes matchings of complementary base pairs of DNA \cite{seegerer2014rna}.  Similar but different graphs occur naturally in other contexts, including combinatorics (for instance the Bruhat graph, see for an overview \cite[Chapter 2]{MR2133266}) and geometry (where it characterizes geometric relationships between components of Springer fibers \cite{fresse2009betti}). 

The web graph requires more technical background to describe explicitly so we defer details to Section \ref{section: preliminaries}.  Intuitively the web representation can be thought of as coming from first twisting two adjacent strands in the web and then resolving according to traditional skein relations \cite[Theorem 5.9]{MR2869443}.  The web graph has a directed edge from one web $w$ to another $w'$ if this twisting-and-resolving process results in the combination $w+w'$ and if $w'$ has more nested arcs than $w$.  

Our first main result is Theorem \ref{theorem: graph isomorphism}, which proves that the web graph and the tableau graph are isomorphic as directed graphs via the traditional bijection between webs and standard tableaux \cite{MR1994220}.  We then use earlier results about the tableau graph \cite{pagnon2007adjacency} to conclude that the graphs are directed, acyclic graphs with a unique greatest element and a unique least element.  Thus the reachability condition within the graph induces a poset structure on both webs and tableaux.

The language of tableaux better illuminates some properties of the graph while the language of webs is better suited to demonstrate other properties.  For instance Section \ref{section: fundamental properties} shows that the tableau graph is a subgraph of the Bruhat graph.  From this perspective Corollary \ref{corollary: distance is number of inversions} proves that distance in the tableau graph is measured by the number of inversions in certain permutations corresponding to tableaux.  From the other perspective Section \ref{section: nesting} describes a way to compute distance graphically using webs: distance from the origin is measured by the amount of nesting within a web.  This allows us to interpret the partial order in terms of nesting and unnesting, which occupies the rest of Section \ref{section: nesting}.

As an application, we study the transition matrix between the web basis and the Specht basis.  To construct this matrix, we explicitly give an $S_{2n}$-equivariant isomorphism from the Specht module to web space, and in fact prove in Corollary \ref{corollary: unique equivariant isomorphism}
 that this equivariant isomorphism is essentially unique.  In an earlier paper, the authors constructed a map of representations from web space to the Specht module \cite[Lemma 4.2]{MR2801314}, so we conclude that the previous map is the inverse of the one defined in this paper. 

Our final results identify the Specht basis with its image in web space under this equivariant isomorphism and then analyze the transition matrix between the Specht and web bases.  Theorem \ref{theorem: upper triangular} proves that the transition matrix is upper-triangular with ones along the diagonal,  which Garsia-McLarnan proved earlier using the Kazhdan-Lusztig basis \cite{garsia1988relations}. Using our framework, we strengthen this result in Theorem \ref{theorem: upper triangular and respects poset} which shows that in addition the $(w,w')$ entry is zero unless $w \preceq w'$ according to the partial order induced by the web graph.

We make two conjectures that are supported by all current data: 1) For all $w\preceq w'$ the $(w,w')$ entry is nonzero, and 2) all entries in the transition matrix are nonnegative.  We also conjecture that similar results hold for webs for $\mathfrak{sl}_3$.  

The second author was partially supported by NSF DMS-1362855.  The authors gratefully acknowledge helpful comments from Sabin Cautis, Mikhail Khovanov, Greg Kuperberg,  Brendon Rhoades, Anne Schilling, and John Stembridge.

\section{Preliminaries on Webs and Tableaux}\label{section: preliminaries}

\subsection{Young tableaux and Specht modules}

Let $m\in \mathbb{N}$ and let $\lambda = (n_1 \geq n_2 \geq \ldots \geq n_k)$ denote a partition of $m$.  A {\em Young diagram} of shape $\lambda$ is a top- and left- justified collection of boxes where the $i^{th}$ row has $n_i$ boxes. In this paper we focus on the case where $m=2n$ for some positive integer $n$ and where the partition $\lambda = (n,n)$. In this case Young diagrams of shape $\lambda$ have two rows of equal length.

A {\em Young tableau} of shape $\lambda$ is a filling of the Young diagram of shape $\lambda$ with the numbers $\{1, \ldots, m\}$ such that every number occurs exactly once. If the numbers increase strictly from left to right along rows and from top to bottom along columns, the tableau is {\em standard}. Denote by $\mathcal{T}_n$ the set of standard Young tableaux of shape $(n,n)$.  We remark that the number of elements of $\mathcal{T}_n$ is the $n^{th}$ Catalan number $C_n = \frac{1}{n+1} { 2n \choose n}$. 

Let $S_{2n}$ be the symmetric group consisting of permutations of $2n$ elements. Denote by $s_i$ the permutation that transposes $i$ and $i+1$ and leaves all other values fixed. It is natural to permute entries of tableaux as follows: for each Young tableau $T$ define $s_i\cdot T$ to be the Young tableau obtained by swapping entries $i$ and $i+1$ in $T$. However, depending on the entries in $T$ it is possible that $T\in \mathcal{T}_n$ while $s_i\cdot T \notin \mathcal{T}_n$. In other words this action does not preserve the subset of standard Young tableaux. 

Specht modules were developed in essence to turn this into a well-defined action of the symmetric group.  Consider the complex vector space $V^{\mathcal{T}_n}$ with basis indexed by the set of standard Young tableaux $\mathcal{T}_n$. Given $T\in \mathcal{T}_n$ we denote the corresponding vector in  $V^{\mathcal{T}_n}$ by $v_T$. The vector space $V^{\mathcal{T}_n}$ is called the Specht module for the partition $(n,n)$.  

The Specht module $V^{\mathcal{T}_n}$ is actually a subspace of a larger vector space generated by equivalence classes of tableaux called tabloids.  A {\em tabloid} is an equivalence class of tableaux up to reordering of rows.  Denote the tabloid containing a specific tableau $T$ by $\{T\}$. The Specht vector $v_T$ corresponding to a (not necessarily standard) tableau $T$ is defined as
\[v_T = \sum_{\sigma \in Col(T)} sign(\sigma) \{\sigma \cdot T\} \]
where $Col(T)$ is the group of permutations that reorder the columns of $T$. The Specht module $V^{\mathcal{T}_n}$ is the span of the Specht vectors, and it has basis given by the vectors corresponding to standard tableaux. 

The natural symmetric group action on tableaux described above preserves tabloids and thus acts on the Specht module. In fact the Specht module is irreducible under this symmetric group action.  We will need the following facts about this action. 
\begin{itemize}
\item Let $T$ be a standard Young tableau of shape $(n,n)$ and let $\sigma\in S_{2n}$ be a permutation for which $\sigma \cdot T$ is also a standard Young tableau.  Then the action on Specht vectors satisfies 
\begin{equation} \label{equation: acting on Specht vectors}
\sigma\cdot v_T=v_{\sigma\cdot T}.
\end{equation}
(See e.g. \cite[Exercise 3, page 86]{MR1464693}.)  We remark that if $s_i\cdot T\notin \mathcal{T}_n$ then it is still true that $\sigma \cdot v_T =  v_{\sigma \cdot T}$ but the vector $v_{\sigma \cdot T}$ is no longer a basis vector of the Specht module.
\item Let $T_0$ be the standard Young tableau of shape $(n,n)$ whose top row is filled with the odd numbers $1,3,5,\ldots,2n-1$ and whose bottom row is filled with the even numbers $2,4,6,\ldots,2n$.  Then for each odd $i=1,3,5,\ldots,2n-1$
\begin{equation} \label{equation: acting on v_{T_0}}
s_i \cdot v_{T_0} = -v_{T_0}.
\end{equation}  
To see this, observe that if $T_0$ is the column-filled tableau defined above then $s_i \in Col(T)$, and so the claim holds.
\end{itemize}

\subsection{Webs}

Next we describe webs and the $S_{2n}$-action on web space.  A {\em web} on $2n$ points is a nonintersecting arrangement of arcs above a horizontal axis connecting $2n$ vertices on the axis. We enumerate the vertices from left to right and reference arcs according to their endpoints, so $(i,j)$ is in the web $w$ if $i<j$ and vertices $i$ and $j$ are connected by an arc in $w$. We sometimes know the endpoints of an arc but not their order, in which case we use the notation $\{i,j\}$. The figures typically do not number the vertices in webs. Denote the set of all webs on $2n$ points by $\mathcal{W}_n$.  Like standard Young tableaux of shape $(n,n)$ the set $\mathcal{W}_n$ is enumerated by the Catalan numbers. 

Let $V^{\mathcal{W}_n}$ be the complex vector space with basis indexed by the set of webs $\mathcal{W}_n$. Given a web $w\in \mathcal{W}_n$ we denote the corresponding vector in $V^{\mathcal{W}_n}$ by $w$. This differs from the notational convention for vectors in $V^{\mathcal{T}_n}$ because we consider permutations acting both on the set $\mathcal{T}_n$ and the vector space $V^{\mathcal{T}_n}$ and these actions only sometimes coincide. Indeed only the Specht module $V^{\mathcal{T}_n}$ admits a true group action.  By contrast there is only one action of $S_{2n}$ on the vector space $V^{\mathcal{W}_n}$.

To define this action choose $w\in V^{\mathcal{W}_n}$ and $s_i\in S_{2n}$. The image $s_i \cdot w$ is:
\[ s_i \cdot w = \left\{ \begin{array}{ll} -w & \textup{ if $(i,i+1)$ is an arc in $w$, and otherwise } \\
 & \hspace{1em} \\
                               w+w' & \begin{array}{l}\textup{ where $w'$ differs from $w$ only in the two arcs incident to $i$ and $i+1$,} \\ \textup{ with $w$ containing $\{i,j\}$ and $\{i+1,k\}$ and $w'$ containing $(i,i+1)$ and $\{j,k\}$} \end{array} \end{array} \right. \]
 Note that the relative order of $i, j,$ and $k$ determines if these arcs are nested or unnested. 
 
With respect to this action $V^{\mathcal{W}_n}$ is an irreducible representation and in fact coincides with the representation on the Specht module \cite{PPR, MR2801314}.  One can also construct and study this representation via skew Howe duality \cite{cautis2014webs}.

\subsection{The tableau graph and the web graph}

We now construct two graphs, one with vertices given by standard Young tableaux and the other with vertices given by webs.  Edges will come from the $S_{2n}$-action on each object.  We will then prove that these two graphs are isomorphic as directed graphs.
 
 Let $\Gamma^{\mathcal{T}_n}$ be the graph whose vertex set is $\mathcal{T}_n$ with an edge between $T$ and $T'$ if and only if there is a simple transposition such that $s_i\cdot T = T'$.  We typically consider the graph to have labels on the edges, and label the edge between $T$ and $T'$ by $s_i$ if $s_i \cdot T = T'$.  We also consider the directed graph with same vertex set and edge set as $\Gamma^{\mathcal{T}_n}$ but with each edge directed $T \stackrel{s_i}{\longrightarrow} T'$ if $i$ is below $i+1$ in $T$.  We use the same notation for both graphs, and call each the {\em tableau graph}.  Figure \ref{figure: tableau and web graphs} shows $\Gamma^{\mathcal{T}_3}$.

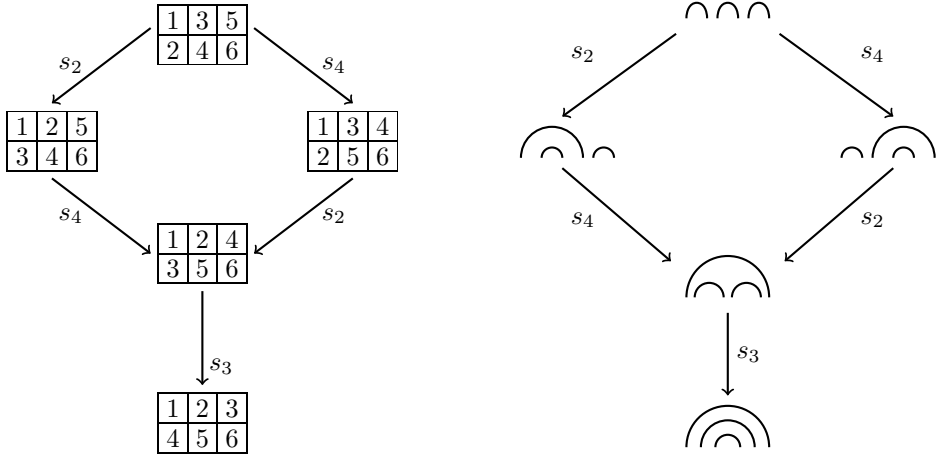
\begin{figure}[h]

\begin{tikzpicture}[baseline=0cm, scale=0.25]
\node at (0,1) {$\young(123,456)$};
\node at (0,10) {$\young(124,356)$};
\node at (-8,16) {$\young(125,346)$};
\node at (8,16) {$\young(134,256)$};
\node at (0,21.65) {$\young(135,246)$};
\draw[style=thick, ->] (0,8)--(0,3);
\draw[style=thick, ->] (8,14)--(2.75,10);
\draw[style=thick, ->] (-8,14)--(-2.75,10);
\draw[style=thick, <-] (8,18)--(2.75,22);
\draw[style=thick, <-] (-8,18)--(-2.75,22);
\node at (1,4) {$s_3$};
\node at (7,12) {$s_2$};
\node at (-7,12) {$s_4$};
\node at (7,20) {$s_4$};
\node at (-7,20) {$s_2$};
\end{tikzpicture}
\hspace{.5in} 
\raisebox{-10pt}{\begin{tikzpicture}[baseline=0cm, scale=0.275]
\draw[style=thick] (-2,1) to[out=90, in=180] (0,3) to[out=0, in=90] (2,1);
\draw[style=thick] (-1.3,1) to[out=90, in=180] (0,2.3) to[out=0, in=90] (1.3,1);
\draw[style=thick] (-.6,1) to[out=90, in=180] (0,1.6) to[out=0, in=90] (.6,1);

\draw[style=thick] (-2,8.25) to[out=90, in=180] (0,10.25) to[out=0, in=90] (2,8.25);
\draw[style=thick] (-1.6,8.25) to[out=90, in=180] (-.9,8.95) to[out=0, in=90] (-.2,8.25);
\draw[style=thick] (.2,8.25) to[out=90, in=180] (.9,8.95) to[out=0, in=90] (1.6,8.25);

\draw[style=thick] (-10,15) to[out=90, in=180] (-8.5, 16.5) to[out=0, in=90] (-7, 15);
\draw[style=thick] (-9, 15) to[out=90, in=180] (-8.5, 15.5) to[out=0, in=90] (-8, 15);
\draw[style=thick] (-6.5, 15) to[out=90, in=180] (-6, 15.5) to[out=0, in=90] (-5.5, 15);

\draw[style=thick] (10,15) to[out=90, in=0] (8.5, 16.5) to[out=180, in=90] (7, 15);
\draw[style=thick] (9, 15) to[out=90, in=0] (8.5, 15.5) to[out=180, in=90] (8, 15);
\draw[style=thick] (6.5, 15) to[out=90, in=0] (6, 15.5) to[out=180, in=90] (5.5, 15);

\draw[style=thick] (-.5, 21.75) to[out=90, in=180] (0, 22.5) to[out=0, in=90] (.5, 21.75);
\draw[style=thick] (-2, 21.75) to[out=90, in=180] (-1.5, 22.5) to[out=0, in=90] (-1, 21.75);
\draw[style=thick] (2, 21.75) to[out=90, in=0] (1.5, 22.5) to[out=180, in=90] (1, 21.75);

\draw[style=thick, ->] (0,7.5)--(0,3.5);
\draw[style=thick, ->] (8,14.5)--(2.75,10);
\draw[style=thick, ->] (-8,14.5)--(-2.75,10);
\draw[style=thick, <-] (8,17)--(2.5,21);
\draw[style=thick, <-] (-8,17)--(-2.5,21);
\node at (1,5.5) {$s_3$};
\node at (7,12) {$s_2$};
\node at (-7,12) {$s_4$};
\node at (7,20) {$s_4$};
\node at (-7,20) {$s_2$};
\end{tikzpicture}}
\caption{The $(3,3)$ tableau and web graphs} \label{figure: tableau and web graphs}
\end{figure}

Let $\Gamma^{\mathcal{W}_n}$ be the directed graph with vertex set $\mathcal{W}_n$ and a labeled, directed edge $w\stackrel{s_i}{\longrightarrow}w'$ between $w,w'\in \mathcal{W}_n$ if and only if both 
\begin{itemize}
\item $s_i \cdot w = w+w'$ and 
\item $w$ has unnested arcs $(j,i)$ and $(i+1, k)$. 
\end{itemize}
(The first condition is a necessary condition for the edge to exist and the second determines its direction.) We call $\Gamma^{\mathcal{W}_n}$ the {\em web graph}.  Figure \ref{figure: tableau and web graphs} shows $\Gamma^{\mathcal{W}_3}$.

\begin{remark}
Note that the target endpoint $w'$ of an edge $w \stackrel{s_i}{\longrightarrow} w'$ in the web graph must contain the arc $(i,i+1)$.  Moreover the arc $(i,i+1)$ is nested under the arc $(j,k)$.
\end{remark}

We use an explicit combinatorial bijection $\psi: \mathcal{T}_n\rightarrow \mathcal{W}_n$ between standard Young tableaux and webs that arises from geometric considerations \cite{MR1994220}.  Given $T\in \mathcal{T}_n$ the corresponding web $\psi(T)$ is constructed by the following process. Begin with $2n$ evenly spaced vertices on a horizontal axis. Starting with the leftmost number in the bottom row of $T$, connect the corresponding vertex to its nearest unoccupied lefthand neighbor. Repeat this process moving from left to right across the bottom row of $T$.  For example, recall that $T_0$ is the standard tableau whose $i^{th}$ column is filled with $2i-1$ and $2i$ for each $i$ and define $w_0$ to be the web with arcs $(1,2), (3,4), \ldots, (2n-1, 2n)$. Then 
\[\psi(T_0) = w_0.\]
Figure \ref{figure: tableau and web graphs} shows more examples: the map $\psi$ sends each vertex in the tableau graph to the corresponding vertex in the web graph.  

In fact the map $\psi$ induces an isomorphism between the directed graphs $\Gamma^{\mathcal{T}_n}$ and $\Gamma^{\mathcal{W}_n}$ as we now prove.

\begin{theorem} \label{theorem: graph isomorphism}
The directed graphs $\Gamma^{\mathcal{W}_n}$ and $\Gamma^{\mathcal{T}_n}$ are isomorphic. Furthermore the isomorphism is given by the bijection $\psi$ between tableaux and webs described above.
\end{theorem}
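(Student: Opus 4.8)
The plan is to show that $\psi$ carries edges of $\Gamma^{\mathcal{T}_n}$ to edges of $\Gamma^{\mathcal{W}_n}$ and conversely, matching both the labels and the directions. Since $\psi$ is already known to be a bijection on vertices, it suffices to analyze a single simple transposition $s_i$ acting on a standard tableau $T$ and compare the outcome with the action of $s_i$ on the web $w = \psi(T)$. The first reduction is to split into cases according to the relative positions of $i$ and $i+1$ in $T$: either they lie in the same column, or they lie in the same row, or they lie in different rows and different columns. In the first case $s_i \cdot T \notin \mathcal{T}_n$, so there is no edge at $T$ labeled $s_i$ in the tableau graph; I would check that correspondingly $(i,i+1)$ is an arc of $w$, so that $s_i \cdot w = -w$ and there is no edge labeled $s_i$ at $w$ in the web graph either. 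The remaining two cases are where the real content lies.

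The key computational step is to track how $\psi$ behaves under the arc-drawing algorithm when $i$ and $i+1$ are swapped. I would argue that in the algorithm, $i$ and $i+1$ are processed consecutively (or, if $i$ is in the top row, that swapping them only affects which vertex gets matched to which among a small cluster near positions $i$ and $i+1$), so that $\psi(T)$ and $\psi(s_i \cdot T)$ differ only in the arcs incident to vertices $i$ and $i+1$. Concretely, I expect to show: if $i$ is below $i+1$ in $T$, then $w = \psi(T)$ has two arcs $\{j,i\}$ and $\{i+1,k\}$ (with $j < i < i+1$ and $i+1 < k$ forced by the left-neighbor matching rule, hence these are \emph{unnested}), while $w' = \psi(s_i\cdot T)$ has arcs $(i,i+1)$ and $\{j,k\}$; and if $i$ is above $i+1$ in $T$ the roles of $w$ and $w'$ reverse. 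Comparing with the definition of the $S_{2n}$-action on web space, this says precisely that $s_i \cdot w = w + w'$ with $w$ having the unnested configuration, i.e. that $\psi(T) \stackrel{s_i}{\longrightarrow} \psi(s_i \cdot T)$ is a directed edge of $\Gamma^{\mathcal{W}_n}$ exactly when $T \stackrel{s_i}{\longrightarrow} s_i\cdot T$ is a directed edge of $\Gamma^{\mathcal{T}_n}$. Running this argument in both directions (every edge of the web graph has the form $s_i\cdot w = w+w'$ with $w$ having unnested arcs $(j,i),(i+1,k)$, and one reads off that $\psi^{-1}(w)$ has $i$ below $i+1$) gives the isomorphism.

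The main obstacle, and the step deserving the most care, is the precise bookkeeping in the arc-drawing algorithm: I need to verify that swapping $i$ and $i+1$ in $T$ really does localize to the arcs at vertices $i$ and $i+1$ and does not cascade through later matchings. The subtlety is that the algorithm matches each bottom-row entry to its \emph{nearest unoccupied} left neighbor, so a change at an early step could in principle shift every subsequent match; I would rule this out by observing that after vertices $i$ and $i+1$ are resolved (in either order), the set of occupied vertices among $\{1,\dots,i+1\}$ is the same in both $\psi(T)$ and $\psi(s_i\cdot T)$, so all later matchings proceed identically. A clean way to organize this is to induct on the position in the bottom row, or alternatively to use the standard recursive ``innermost arc'' description of $\psi$; either way the point is to show the two webs agree outside vertices $i,i+1$. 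Once that localization is established, the remaining verification — that the two local pictures are exactly the $w$ versus $w'$ of the web-space action, with nesting/unnesting matching the above/below relation — is a short finite check. It is also worth noting, as the excerpt's remark anticipates, that in $w'$ the arc $(i,i+1)$ is nested under $(j,k)$, which is consistent with the "more nested arcs" interpretation of the edge direction.
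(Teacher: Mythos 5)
Your overall strategy is the same as the paper's: translate the tableau condition ``$i$ is below $i+1$ in $T$'' into the web condition ``$\psi(T)$ has unnested arcs $(j,i)$ and $(i+1,k)$'' via the correspondence bottom row $\leftrightarrow$ right endpoint, top row $\leftrightarrow$ left endpoint, and then match the swap $T\mapsto s_i\cdot T$ with the local arc exchange $(j,i),(i+1,k)\mapsto(j,k),(i,i+1)$ in both directions. The paper's proof is in fact terser than yours: it asserts the local-exchange correspondence ``by the definitions'' without checking that the change does not propagate through the matching algorithm, which is exactly the point you rightly single out as the one deserving care.

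However, the justification you propose for that key step is false as stated. It is not true that after vertices $i$ and $i+1$ are resolved the occupied sets among $\{1,\dots,i+1\}$ agree: processing the bottom-row entry $i$ of $T$ occupies $\{j,i\}$, while processing the bottom-row entry $i+1$ of $s_i\cdot T$ occupies $\{i,i+1\}$, so the two occupied sets differ in exactly the elements $j$ and $i+1$. (Concretely, for $T=\young(125,346)$ and $i=4$, after the relevant step the occupied sets are $\{1,2,3,4\}$ versus $\{2,3,4,5\}$.) If they really were equal, all later matchings would literally coincide and $\psi(s_i\cdot T)$ could not contain the arc $\{j,k\}$ that your own conclusion (correctly) requires. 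The repair is short: note that the two unoccupied sets differ only by the substitution $i+1\leftrightarrow j$, and that every other unoccupied vertex $\leq i+1$ is smaller than $j$; hence all subsequent nearest-unoccupied-left-neighbor searches agree until the first bottom-row entry $k$ whose search descends to $i+1$ or below, which is matched to $i+1$ in $\psi(T)$ and to $j$ in $\psi(s_i\cdot T)$, after which the occupied sets coincide and all remaining arcs agree. Two smaller remarks: in your same-column case, $s_i\cdot w=-w$ only rules out an \emph{outgoing} edge labeled $s_i$, so ``no edge at $w$'' would additionally require observing that $(i,i+1)$ has no umbrella arc there; but that case is in any event redundant, since your two-way correspondence between edges of $\Gamma^{\mathcal{T}_n}$ and edges of $\Gamma^{\mathcal{W}_n}$ already accounts for the entire edge sets of both graphs.
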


\begin{proof}
Suppose $T$ and $T'$ are two standard tableaux.  Let $w$ be the web that corresponds to $T$ and $w'$ be the web that corresponds to $T'$.  The following are equivalent by the definitions:
\begin{itemize}
\item entry $i$ is below $i+1$ in $T$
\item $i$ is a right endpoint and $i+1$ is a left endpoint in $w$
\item the web $w$ has a pair of unnested arcs $(j,i)$ and $(i+1, k)$
\end{itemize}  
There is an edge $T \stackrel{s_i}{\longrightarrow} T'$ in the tableau graph if and only if $s_i\cdot T = T'$ and $i$ is below $i+1$ in  $T$.  By above this is true if and only if $w$ and $w'$ are identical except that $w$ contains arcs $(j,i)$ and $(i+1,k)$ while $w'$ contains arcs $(j,k)$ and $(i,i+1)$.  In other words, this is equivalent to the existence of the edge $w  \stackrel{s_i}{\longrightarrow} w'$ in the web graph.  

We conclude that $T \stackrel{s_i}{\longrightarrow} T'$ if and only if $w \stackrel{s_i}{\longrightarrow} w'$ and therefore $\Gamma^{\mathcal{W}_n}$ and $\Gamma^{\mathcal{T}_n}$ are isomorphic as directed, labeled graphs via the bijection between tableaux and webs.
\end{proof}

Since the graphs are isomorphic, results from other papers on the tableau graph apply to the web graph \cite{pagnon2007adjacency, seegerer2014rna}. For instance, the following result collects several properties of the tableau graph that we will need.

\begin{proposition}[Pagnon-Ressayre]
The web and tableau graphs are directed, acyclic graphs with a unique source and a unique sink.
\end{proposition}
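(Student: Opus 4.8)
The plan is to prove the statement for one of the two graphs --- they are isomorphic by Theorem~\ref{theorem: graph isomorphism} --- passing freely between the tableau model and the web model according to which makes a given property transparent. Note that ``directed'' is built into the definitions, so the content is acyclicity together with uniqueness of the source and the sink.

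\emph{Acyclicity (and existence of a source and a sink).} I would exhibit a grading that strictly increases along directed edges. On tableaux, let $\beta(T)$ be the sum of the entries in the bottom row of $T$. If $T\stackrel{s_i}{\longrightarrow}T'$ then, by definition, $i$ is below $i+1$ in $T$; since consecutive integers of a standard tableau share a row only when horizontally adjacent, this forces $i$ into the bottom row and $i+1$ into the top row, so $T'$ moves $i$ up and $i+1$ down and $\beta(T')=\beta(T)+1$. Hence $\beta$ strictly increases along every directed edge, the (finite, nonempty) graph has no directed cycle, and it therefore has at least one source and at least one sink. On the web side this is the statement that the number of nested pairs of arcs increases along each edge (cf.\ Section~\ref{section: nesting}).

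\emph{Uniqueness.} Here I would work with webs, after recording two observations that remove the side conditions in the definition of $\Gamma^{\mathcal{W}_n}$: (i) if vertex $i$ is a right endpoint of $w$ and $i+1$ a left endpoint, then the incident arcs $(j,i)$ and $(i+1,k)$ have disjoint supporting intervals, hence are automatically unnested, so $w$ has an outgoing $s_i$-edge if and only if $i$ is a right endpoint and $i+1$ a left endpoint of $w$; and (ii) if $w$ contains a short arc $(i,i+1)$ lying under some arc $(j,k)$, then replacing $(i,i+1)$ and $(j,k)$ by $(j,i)$ and $(i+1,k)$ always produces a noncrossing matching $w''$ with an outgoing $s_i$-edge to $w$, so $w$ is the target of an incoming edge if and only if some short arc of $w$ is not outermost (i.e.\ lies under another arc). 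From (i): $w$ is a sink exactly when the left/right word of its endpoints has no $RL$ factor, i.e.\ equals $L^nR^n$, and the unique noncrossing matching with that endpoint pattern is the fully nested web $(1,2n),(2,2n-1),\dots,(n,n+1)$; so there is a unique sink. From (ii): $w$ is a source exactly when every short arc of $w$ is outermost. I would finish by induction on $n$: every noncrossing matching has a short arc (a shortest arc is short); fixing one, say $(i,i+1)$, the outermost condition forces every other arc of $w$ to lie entirely within $\{1,\dots,i-1\}$ or entirely within $\{i+2,\dots,2n\}$, so $w=w_L\sqcup\{(i,i+1)\}\sqcup w_R$ with $w_L,w_R$ inheriting the source property on their blocks; by induction each is the all-short-arc web, so $w=w_0$. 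By Theorem~\ref{theorem: graph isomorphism} everything transfers to $\Gamma^{\mathcal{T}_n}$, with $w_0\leftrightarrow T_0$ and the fully nested web corresponding to the tableau with rows $1,\dots,n$ and $n+1,\dots,2n$.

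The main obstacle I anticipate is the uniqueness of the source: one must verify carefully that the decomposition $w=w_L\sqcup\{(i,i+1)\}\sqcup w_R$ is genuinely forced --- no arc of $w$ may straddle $\{i,i+1\}$ or have one endpoint inside and one outside --- and that the source property really is inherited by $w_L$ and $w_R$ rather than merely implied by it. The rest (acyclicity, existence, and the $L^nR^n$ characterization of the sink) is routine once (i) and (ii) are in hand. As a consistency check, these descriptions match Figure~\ref{figure: tableau and web graphs}: the all-short-arc web $w_0$ is the source and the fully nested web is the sink of $\Gamma^{\mathcal{W}_3}$.
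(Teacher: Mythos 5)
Your proof is correct, but it is worth noting that the paper does not actually prove this proposition: it is attributed to Pagnon--Ressayre and cited, with only fragments appearing later (the lemma in Section \ref{section: fundamental properties} exhibits $w_0$ as \emph{a} source and the row-filled tableau as \emph{a} sink without proving uniqueness, and Lemma \ref{lemma: min-length path in graph} and Corollary \ref{corollary: nesting number grows by one} supply strictly increasing statistics along edges, which is the paper's implicit route to acyclicity via the Bruhat graph). Your argument is therefore a genuinely self-contained alternative: the bottom-row-sum grading $\beta$ is an elementary substitute for the inversion count, and your characterizations of sinks (no $RL$ factor in the endpoint word, forcing $L^nR^n$ and hence the fully nested web) and of sources (every short arc outermost, forcing $w_0$ by induction on blocks) prove uniqueness directly rather than importing it. What the paper's route buys is the connection to Bruhat order, which it needs anyway for Corollary \ref{corollary: distance is number of inversions}; what yours buys is independence from that machinery.

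One small imprecision to fix in your observation (ii): if $(i,i+1)$ lies under an arc $(j,k)$ that is \emph{not} its umbrella arc, then replacing $(i,i+1)$ and $(j,k)$ by $(j,i)$ and $(i+1,k)$ generally creates a crossing with any arc separating them, so the replacement does not ``always'' produce a noncrossing matching. This does not damage your characterization of sources --- for the existence of an incoming edge you need only one witness, and taking $(j,k)$ to be the umbrella arc of $(i,i+1)$ works (this is exactly the construction in the paper's later lemma on edges directed into $w$) --- but the claim should be stated for the umbrella arc only. The decomposition $w=w_L\sqcup\{(i,i+1)\}\sqcup w_R$ that you flag as the delicate point is in fact forced, as you suspect: an arc with one endpoint on each side of the pair $\{i,i+1\}$ would either reuse $i$ or $i+1$ or would cover the short arc, contradicting outermostness, and arcs of $w_L$ can only be covered by arcs of $w_L$, so the source property restricts to the blocks.
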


Figure \ref{figure: tableau and web graphs} shows that the corresponding undirected graph has cycles whenever $n$ is large enough.  Note also that this proposition means the graph represents the Hasse diagram of a poset.  Section \ref{section: nesting} studies this poset structure graphically using webs.

While the two graphs are isomorphic, we retain the language and notation of both graphs throughout this manuscript for several reasons: 1) certain properties are easier to see using one interpretation and not the other; 2) each graph has its own ``hidden properties" that the other doesn't (e.g. the tableau graph can be thought of as an undirected graph because each $s_i$ acts as an involution when defined on a tableau).

\section{Fundamental properties of the web and tableau graph}\label{section: fundamental properties}

In this section we describe the graph $\Gamma^{\mathcal{T}_n}$.  One key tool is viewing the tableau graph as a subgraph of the Bruhat graph; from that we obtain a way to measure distance in the tableau graph as the number of inversions of certain permutations.

First we identify explicitly the unique source and unique sink in the tableau (or web) graph.  This unique source is the heart of the proof of representation theoretic Theorem \ref{T0 to W0}.

\begin{lemma}
The column-filled tableau $T_0$ is the unique source in $\Gamma^{\mathcal{T}_n}$.  The corresponding web $w_0$ with arcs $(1,2), (3,4), \ldots, (2n-1,2n)$ is the unique source for $\Gamma^{\mathcal{W}_n}$.

The tableau whose first row is filled with $\{1,2,3,\ldots,n\}$ is the unique sink in $\Gamma^{\mathcal{T}_n}$.  The corresponding web has arcs $(1,2n), (2,2n-1), \ldots,(n,n+1)$ and is the unique sink for $\Gamma^{\mathcal{W}_n}$.
\end{lemma}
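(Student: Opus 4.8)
The plan is to prove the two claims about the tableau graph combinatorially, and then transfer them to the web graph using the graph isomorphism $\psi$ from Theorem~\ref{theorem: graph isomorphism}, which automatically matches sources with sources and sinks with sinks. For the first claim, recall that a vertex $T$ is a source precisely when no incoming edge exists, i.e.\ when there is no $i$ such that $i$ is \emph{above} $i+1$ in $T$ with $s_i \cdot T$ still standard. I would argue that $T_0$, the column-filled tableau, is a source: for every $i$, the entries $i$ and $i+1$ lie in the same column (since column $j$ contains $2j-1$ and $2j$), so swapping them violates the column-strict condition — there is never an edge pointing into $T_0$. Conversely, if $T \neq T_0$ is standard, I claim $T$ has an incoming edge: since $T \neq T_0$, some column contains two entries that are not consecutive, and one shows that there must exist some $i$ with $i$ in the top row, $i+1$ in the bottom row, and $i,i+1$ in different columns, so that $s_{i}\cdot T$ is standard and gives an edge $s_i \cdot T \stackrel{s_i}{\longrightarrow} T$. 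The cleanest way to extract such an $i$ is to take $i+1$ to be the smallest entry of the bottom row that sits strictly to the right of its own column's top entry plus one — or more simply, to invoke the already-established isomorphism and instead prove this on the web side, where $w_0$ (all short arcs $(1,2),(3,4),\dots$) visibly has no unnested pair of arcs $(j,i),(i+1,k)$ because every arc is of the form $(2m-1,2m)$, hence $s_i \cdot w = -w$ for all odd $i$ and there is simply nothing to resolve; and any web $w \neq w_0$ has a long arc, which forces a pair of arcs that can serve as the target configuration $(j,k),(i,i+1)$ of an incoming edge.

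For uniqueness of the sink I would proceed dually. The tableau $T_\infty$ with first row $\{1,2,\dots,n\}$ and second row $\{n+1,\dots,2n\}$ is a sink because for every $i$, if $i \leq n$ then $i+1 \leq n$ also lies in the top row, and if $i \geq n$ then $i, i+1$ both lie in the bottom row (the only ``boundary'' case $i=n$ has $i$ on top and $i+1$ on the bottom, but they occupy the same column, the first one, so swapping them breaks row-strictness) — in every case $i$ is never strictly below $i+1$ in a way that produces an outgoing edge, so no edge leaves $T_\infty$. Conversely any standard $T \neq T_\infty$ must have some entry $i$ in the top row with $i+1$ in the bottom row and in a different column, giving an outgoing edge $T \stackrel{s_i}{\longrightarrow} s_i\cdot T$; here one picks, say, the largest such $i$, or equivalently on the web side observes that the sink web with arcs $(1,2n),(2,2n-1),\dots,(n,n+1)$ is maximally nested so no unnesting move applies, while any other web has a pair of unnested arcs and hence an outgoing edge.

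Having identified $T_0$ and $T_\infty$ as \emph{a} source and \emph{a} sink, and having shown every other vertex has an incoming (resp.\ outgoing) edge, I still need that $T_0$ and $T_\infty$ are the \emph{unique} source and sink. This follows immediately: a source is a vertex with no incoming edge, and I have shown every vertex except $T_0$ has one; symmetrically for the sink. Finally I transfer to webs: since $\psi$ is a directed graph isomorphism, $\psi(T_0) = w_0$ is the unique source of $\Gamma^{\mathcal{W}_n}$ and $\psi(T_\infty)$ — which one checks directly via the arc-building algorithm equals the web with arcs $(1,2n),(2,2n-1),\dots,(n,n+1)$ — is the unique sink.

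The main obstacle is the combinatorial heart of the converse direction: showing that \emph{every} standard tableau other than $T_0$ admits an incoming edge (and dually for $T_\infty$). This amounts to producing, from the mere hypothesis $T \neq T_0$, an explicit index $i$ with $i$ on top, $i+1$ on the bottom, in distinct columns; the subtlety is ensuring $s_i \cdot T$ is still column-strict and row-strict, not just that the swap is ``allowed'' positionally. I expect the web-side reformulation to make this painless — ``$w \neq w_0$ implies $w$ has a long arc implies $w$ has a target configuration $(j,k),(i,i+1)$'' is essentially a one-line observation about noncrossing matchings — so the real work is just checking that the nesting/unnesting bookkeeping in the definition of $\Gamma^{\mathcal{W}_n}$ lines up, which it does by the Remark following the definition of the web graph.
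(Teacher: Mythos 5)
Your overall strategy is sound and in fact more self-contained than the paper's: the paper only verifies that $T_0$ is \emph{a} source and that the row-filled tableau is \emph{a} sink, delegating uniqueness to the cited Pagnon--Ressayre proposition, whereas you also show every other vertex has an incoming (resp.\ outgoing) edge. However, several of your supporting claims are false as stated. First, it is not true that ``for every $i$, the entries $i$ and $i+1$ lie in the same column'' of $T_0$: for even $i=2j$, the entry $i$ sits at the bottom of column $j$ and $i+1$ at the top of column $j+1$, the swap $s_i\cdot T_0$ \emph{is} standard, and the resulting edge is incident to $T_0$ --- it just points outward, since $i$ is below $i+1$. The same-column argument only covers odd $i$, which happens to be exactly the set of $i$ with $i$ above $i+1$, so your argument is repairable, but you must say so. Your web-side substitute is worse: $w_0$ certainly \emph{does} contain unnested pairs of the form $(j,i),(i+1,k)$ (take $(i-1,i)$ and $(i+1,i+2)$ for any even $i$), and in any case that configuration is the criterion for an \emph{outgoing} edge, not an incoming one; you are checking the wrong condition. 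The correct one-line web argument (and the paper's) is that the target of any edge labeled $s_i$ must contain the arc $(i,i+1)$ nested under an umbrella arc, and $w_0$ has no nested arcs at all.

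Second, in the sink argument your treatment of the boundary case $i=n$ is wrong: in the tableau with top row $1,\dots,n$ and bottom row $n+1,\dots,2n$, the entry $n$ sits in column $n$ and $n+1$ in column $1$, not ``the same column, the first one.'' The swap $s_n$ does \emph{not} break standardness; there is an edge at $i=n$, and it is the unique edge incident to this tableau, directed inward because $n$ is on the top row (this is visible in Figure \ref{figure: tableau and web graphs} for $n=3$). So the tableau is a sink, but not for the reason you give. The remaining ingredients of your proof --- that any $w\neq w_0$ contains a short arc $(i,i+1)$ with an umbrella arc and hence an incoming edge, that any web other than the fully nested one has a right endpoint $i$ immediately followed by a left endpoint $i+1$ and hence an outgoing edge, and the computation $\psi(T_\infty)=(1,2n),\dots,(n,n+1)$ --- are correct and would, once the errors above are fixed, yield a complete proof that does not rely on the external uniqueness result.
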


\begin{proof}
We prove that $w_0$ is a source using the web graph.  All edges incident to $w_0$ in $\Gamma^{\mathcal{W}_n}$ are directed out since $w_0$ contains no nested arcs.  

We identify the sink via its tableau.  Note that the tableau with $\{1,2,3,\ldots,n\}$ along the first row is incident to just one edge, namely the edge that exchanges $n$ and $n+1$.  Since $n$ is on the top row we conclude that the tableau is the endpoint of the directed edge and is thus a sink.
\end{proof}

In the next few lemmas, we describe local properties of the graphs $\Gamma^{\mathcal{W}_n}$ and $\Gamma^{\mathcal{T}_n}$.  The first says that consecutive edges in a path have different labels.

\begin{lemma}\label{consecutive}
Suppose that $T\stackrel{s_i}{\longrightarrow} T' \stackrel{s_j}{\longrightarrow} T''$ in the tableau graph. Then $i\neq j$.
\end{lemma}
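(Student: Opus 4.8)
The plan is to argue by contradiction directly in the tableau graph, since the defining condition for an edge there is purely local and easy to manipulate. Suppose $T \stackrel{s_i}{\longrightarrow} T' \stackrel{s_i}{\longrightarrow} T''$ with the same label $s_i$ on both edges. By definition of the directed tableau graph, the first edge means that $i$ lies strictly below $i+1$ in $T$ and $T' = s_i \cdot T$; the second edge means that $i$ lies strictly below $i+1$ in $T'$ and $T'' = s_i \cdot T'$. First I would unwind what $T' = s_i \cdot T$ does to the positions of the entries $i$ and $i+1$: since $s_i$ simply swaps the boxes containing $i$ and $i+1$, if $i$ is below $i+1$ in $T$ then after the swap $i+1$ is below $i$ in $T'$ — equivalently, $i$ is \emph{above} $i+1$ in $T'$.

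This is already the contradiction: the second edge $T' \stackrel{s_i}{\longrightarrow} T''$ requires $i$ to be below $i+1$ in $T'$, but we have just shown $i$ is above $i+1$ in $T'$. Hence the two labels cannot both be $s_i$, so $i \neq j$. I would also note the degenerate point that deserves a sentence: one must check that $T'$ is genuinely a standard tableau (so that "$i$ below $i+1$ in $T'$" is even a meaningful hypothesis), but this is automatic since $T'$ is a vertex of $\Gamma^{\mathcal{T}_n}$, whose vertex set is $\mathcal{T}_n$.

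There is essentially no obstacle here; the only thing to be careful about is the bookkeeping of "above versus below" under the swap, and the observation that in a standard tableau of shape $(n,n)$ each of $i$ and $i+1$ occupies its own box, so "$i$ is below $i+1$" and "$i$ is above $i+1$" are exhaustive and mutually exclusive alternatives (they cannot be in the same column position in the same row, nor can one be missing). Alternatively, one could phrase the whole argument through the web graph using Theorem \ref{theorem: graph isomorphism}: a same-label path $w \stackrel{s_i}{\longrightarrow} w' \stackrel{s_i}{\longrightarrow} w''$ would force $w'$ to contain the arc $(i,i+1)$ (by the Remark following the definition of $\Gamma^{\mathcal{W}_n}$) while simultaneously requiring $w'$ to have unnested arcs $(j,i)$ and $(i+1,k)$, which is impossible. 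I expect the tableau-side argument to be the cleaner of the two and would present that one.
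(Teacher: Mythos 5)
Your argument is correct and is essentially identical to the paper's proof: the paper likewise observes that the edge $T\stackrel{s_i}{\longrightarrow}T'$ forces $i$ into the first row and $i+1$ into the second row of $T'$, so no outgoing edge from $T'$ can be labeled $s_i$. Your extra remarks (standardness of $T'$, the web-graph alternative) are fine but not needed.
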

\begin{proof}
By definition of edges in $\Gamma^{\mathcal{T}_n}$ we know $i$ is in the first row and $i + 1$ is in the second row in $T'$. Hence the edge from $T'$ to $T''$ cannot have label $s_i$.
\end{proof}

Similarly edges that are incident to the same tableau must have different labels, which is refined in the next lemma.

\begin{lemma}
Let $T$ be a tableau. Suppose that $T \stackrel{s_i}{\longrightarrow} T'$ and $T \stackrel{s_j}{\longrightarrow} T''$ are two distinct edges in the tableau graph. Then $|i-j|>1$.
\end{lemma}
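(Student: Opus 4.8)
**Proof proposal for: if $T \xrightarrow{s_i} T'$ and $T \xrightarrow{s_j} T''$ are two distinct edges in the tableau graph, then $|i-j| > 1$.**

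The plan is to work on the tableau side and argue by contradiction. Suppose $|i - j| \le 1$. The case $i = j$ is immediate since the two edges are distinct but $s_i \cdot T$ is determined by $T$, so the edges would coincide; hence we may assume $|i-j| = 1$, and by symmetry take $j = i+1$. An edge $T \xrightarrow{s_i} T'$ exists precisely when $i$ lies below $i+1$ in $T$; similarly $T \xrightarrow{s_{i+1}} T''$ exists precisely when $i+1$ lies below $i+2$ in $T$. So I would assume simultaneously that $i$ is in the bottom row of $T$, that $i+1$ is in the top row of $T$ (from the first edge), and that $i+1$ is in the bottom row of $T$ (from the second edge). This is the desired contradiction: $i+1$ cannot be in both rows of $T$.

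So the argument is essentially a one-line observation once the two "edge exists" conditions are unpacked via the directedness rule "$T \xrightarrow{s_k} s_k \cdot T$ iff $k$ is below $k+1$ in $T$." I would phrase it cleanly: the first edge forces $i+1$ to be in the first row of $T$, the second edge forces $i+1$ to be in the second row of $T$, and these are incompatible. The symmetric case $i = j+1$ is handled identically (now $i$ must be in the second row by the first edge and in the first row by the second). One could alternatively run the whole thing in the web graph using the equivalence from Theorem~\ref{theorem: graph isomorphism} — an edge labeled $s_k$ out of $w$ requires a right endpoint at $k$ and a left endpoint at $k+1$ — but the tableau formulation is the most transparent.

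There is no real obstacle here; the only thing to be careful about is to dispose of the $i = j$ case explicitly (using that the edges are assumed \emph{distinct}) before reducing to $|i-j| = 1$, and to note that $i$, $i+1$, $i+2$ are genuinely distinct entries so that the row-membership statements are not vacuous. I would keep the write-up to a few sentences.
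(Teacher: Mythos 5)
Your argument is correct and is essentially the paper's own proof: both unpack the directedness rule to conclude that each of $i$ and $j$ lies below its successor in $T$, which is incompatible with $j=i+1$ or $i=j+1$. The only cosmetic difference is that you phrase it as a contradiction and handle the $i=j$ case slightly more explicitly, which the paper dispatches by noting distinct edges out of $T$ must carry distinct labels.
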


\begin{proof}
Two distinct edges directed out of $T$ must have different labels by definition, say $s_i$ and $s_j$ for $i \neq j$.  The definition of $s_i$ and $s_j$ implies that $i$ and $j$ are in the second row of $T$ but $i+1$ and $j+1$ are in the first row. Therefore $i \neq j+1$ and $j \neq i+1$ and so $|i-j|>1$.
\end{proof}

This leads immediately to the following corollary about the structure of the web and tableau graphs. 

\begin{corollary}\label{diamond}
For each pair of edges $T \stackrel{s_i}{\longrightarrow} T'$ and $T \stackrel{s_j}{\longrightarrow} T''$ in the tableau graph, there is a diamond formed by two directed paths $T \stackrel{s_i}{\longrightarrow} T'\stackrel{s_j}{\longrightarrow} T'''$ and $T \stackrel{s_j}{\longrightarrow} T''\stackrel{s_i}{\longrightarrow} T'''$ as shown in Figure \ref{figure: diamond lemma}.
\end{corollary}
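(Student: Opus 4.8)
The plan is to deduce everything from the lemma immediately above, which tells us that the two labels satisfy $|i-j|>1$; in particular $s_i$ and $s_j$ are commuting simple transpositions moving the disjoint pairs of values $\{i,i+1\}$ and $\{j,j+1\}$. Given that, I would set $T''' := s_is_j\cdot T = s_js_i\cdot T$ and show that both directed paths $T\stackrel{s_i}{\longrightarrow}T'\stackrel{s_j}{\longrightarrow}T'''$ and $T\stackrel{s_j}{\longrightarrow}T''\stackrel{s_i}{\longrightarrow}T'''$ actually occur in $\Gamma^{\mathcal{T}_n}$. Since these two length-two paths share the source $T$ and the sink $T'''$, that is exactly the diamond in Figure~\ref{figure: diamond lemma}.

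To produce the edge $T'\stackrel{s_j}{\longrightarrow}T'''$ I must check two things from the definition of edges in $\Gamma^{\mathcal{T}_n}$: that $j$ lies below $j+1$ in $T'$ (the direction condition), and that $s_j\cdot T'$ is again standard (so that it is a legitimate vertex). The first is immediate: $T'=s_i\cdot T$ differs from $T$ only by interchanging the entries $i$ and $i+1$, and since $j,j+1\notin\{i,i+1\}$ the boxes holding $j$ and $j+1$ are unchanged, so $j$ is still in the second row and $j+1$ in the first, as in $T$ because $T\stackrel{s_j}{\longrightarrow}T''$ is an edge. For the second I would use the elementary fact about the shape $(n,n)$: if $j$ is strictly below $j+1$ in a standard tableau then $j$ and $j+1$ lie in different rows, and column-strictness forbids the larger entry $j+1$ from sitting directly above the smaller entry $j$, so they lie in different columns; hence swapping them preserves both the row- and column-increasing conditions. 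This yields the edge $T'\stackrel{s_j}{\longrightarrow}s_j\cdot T'$, and $s_j\cdot T'=s_js_i\cdot T=s_is_j\cdot T=T'''$. The edge $T''\stackrel{s_i}{\longrightarrow}T'''$ follows by the symmetric argument with $i$ and $j$ interchanged.

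The only step that requires genuine care is the standardness check; the rest is bookkeeping with the commuting transpositions $s_i,s_j$ and the direction conditions, and in fact the check need only be run once since $s_j\cdot T'=s_i\cdot T''$. Finally, to confirm the picture really is a nondegenerate diamond I would note that a permutation fixing a tableau (viewed as a bijection from boxes to $\{1,\dots,2n\}$) must be the identity, and none of $s_i$, $s_j$, $s_is_j$ is the identity, so $T$, $T'$, $T''$, $T'''$ are pairwise distinct.
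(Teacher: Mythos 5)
Your proposal is correct and follows essentially the same route as the paper: the paper's (one-line) proof likewise invokes the preceding lemma to get $|i-j|>1$, commutes $s_i$ and $s_j$, and asserts that $s_is_j\cdot T=s_js_i\cdot T$ is standard, defining $T'''$. You simply spell out the details the paper leaves implicit (the direction conditions on the second pair of edges, the standardness check via different rows and columns, and the distinctness of the four vertices), all of which are verified correctly.
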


\begin{proof}
If $s_i \cdot T$ and $s_j \cdot T$ are both standard and $|i-j|>1$ then $s_is_jT=s_js_iT$ is also standard.
\end{proof}

\begin{figure}[h]
\begin{tikzpicture}[scale=0.2]

\node at (0,9.5) {\Large{$T'''$}};
\node at (.12,21.25) {\Large{$T$}};
\node at (-6,15.75) {\Large{$T'$}};
\node at (6.1,15.75) {\Large{$T''$}};

\node at (5,12) {$s_i$};
\node at (-5,12) {$s_j$};
\node at (4,20) {$s_j$};
\node at (-4,20) {$s_i$};

\draw[style=thick, ->] (5.5,14.5)--(1.5,10);
\draw[style=thick, ->] (-5.5,14.5)--(-1.5,10);
\draw[style=thick, <-] (5.5,17)--(1,21);
\draw[style=thick, <-] (-5.5,17)--(-1,21);
\end{tikzpicture}
\caption{A diamond in the graph $\Gamma^{\mathcal{T}_n}$ }\label{figure: diamond lemma}
\end{figure}
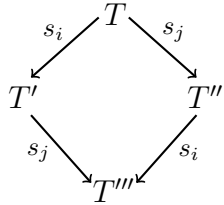

The previous claim is a kind of ``diamond lemma" for web and tableau graphs, specializing a similar lemma in the Bruhat graphs of permutations \cite[Proposition 2.2.7]{MR2133266}.  This is not accidental: the tableau graph is in fact a subgraph of the well-known Bruhat graph, as we discuss in the rest of this section.

Given a tableau $T$ we can construct a permutation $\sigma_T$ in one-line notation by reading the entries of $T$ in a specific order (see e.g. \cite[Definition 3.5 and subsequent]{dewitt2012poset}).  For our purposes one particular reading order is most useful, as defined next.

\begin{definition}\label{definition: permutation of tableau}
For each tableau $T \in \mathcal{T}_n$ construct a permutation $\sigma_T$ by reading the entries of $T$ in the order that $T_0$ is filled, namely down each column from the leftmost to the rightmost column.  
\end{definition}

For example the tableau $T_0$ corresponds to the identity permutation $1 \hspace{1em} 2\hspace{1em}3\hspace{1em}4 \cdots 2n$ while the word corresponding to the sink in the tableau graph is $1 \hspace{1em}(n+1) \hspace{1em}2 \hspace{1em}(n+2)\hspace{1em}3 \hspace{1em}(n+3) \cdots n\hspace{1em} (2n)$.  

With this description, the following proposition is immediate.

\begin{proposition}
The tableau graph is a subgraph of the Bruhat graph.  The vertices consist of those permutations that correspond to standard Young tableaux of shape $(n,n)$ and the edges consist of those edges $(i,i+1)$ that connect vertices $\sigma \leftrightarrow (i,i+1) \sigma$ in the Bruhat graph. 
\end{proposition}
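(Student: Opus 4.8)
The plan is to turn the informal identification ``$T \leftrightarrow \sigma_T$'' into a precise embedding of graphs, and then check that the images of tableau-graph edges are exactly the advertised Bruhat-graph edges. First I would record that the reading order of Definition~\ref{definition: permutation of tableau} is nothing but a fixed bijection $\rho$ from the positions $\{1,2,\dots,2n\}$ to the boxes of the shape $(n,n)$ --- it sends $2c-1$ to the box in row $1$, column $c$, and $2c$ to the box in row $2$, column $c$ --- so that $\sigma_T = T\circ\rho$ and conversely $T = \sigma_T\circ\rho^{-1}$. Hence $T\mapsto\sigma_T$ is injective, and its image is precisely the set of permutations $\sigma\in S_{2n}$ for which $\sigma\circ\rho^{-1}$ is a standard tableau of shape $(n,n)$; this identifies the vertex set of $\Gamma^{\mathcal{T}_n}$ with the set named in the statement.

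Next I would translate edges. If $T\stackrel{s_i}{\longrightarrow}T'$ is an edge of $\Gamma^{\mathcal{T}_n}$, then $T' = s_i\cdot T$ differs from $T$ only by interchanging the entries $i$ and $i+1$; since the reading order is the same for both tableaux, $\sigma_{T'}$ is obtained from $\sigma_T$ by interchanging the values $i$ and $i+1$ wherever they occur, that is, $\sigma_{T'} = (i,i+1)\,\sigma_T$ with the transposition acting on the left. The converse follows by the same token: if $T,T'\in\mathcal{T}_n$ and $\sigma_{T'}=(i,i+1)\,\sigma_T$, then $T'=s_i\cdot T$, and since both are standard, $\{T,T'\}$ is an edge of $\Gamma^{\mathcal{T}_n}$. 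To see that each such pair $\{\sigma_T,(i,i+1)\sigma_T\}$ really is a Bruhat-graph edge, I would invoke the standard fact that an element of $S_{2n}$ and its image under a reflection are always comparable in Bruhat order; and to match the directed structure I would compute inversions: when $i$ lies in row $2$, column $c$, and $i+1$ in row $1$, column $c'$ of $T$ (the condition for the directed edge $T\to T'$), column- and row-strictness force $c'>c$, so the value $i$ sits at position $2c$ and the value $i+1$ at position $2c'-1>2c$ in $\sigma_T$; interchanging these two values flips exactly the pair of positions $(2c,2c'-1)$ from a non-inversion to an inversion and changes no other pair, whence $\ell(\sigma_{T'})=\ell(\sigma_T)+1$ and $\sigma_T\to\sigma_{T'}$ is a covering edge of Bruhat order directed consistently with the tableau graph.

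No new ideas beyond the local structure of $\Gamma^{\mathcal{T}_n}$ already established in this section are needed, which is why the proposition is ``immediate''. The only points requiring care are the multiplication-side convention --- making sure that $s_i\cdot T$ corresponds to \emph{left} multiplication by the transposition $(i,i+1)$, as matches the one-line-notation and Bruhat-graph conventions --- and the citation of comparability-under-reflections for the Bruhat graph; I expect these small bookkeeping matters, rather than any substantive difficulty, to be the only obstacle.
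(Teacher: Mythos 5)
Your proposal is correct and is just a careful writing-out of the verification the paper declares ``immediate'': the identification $T\mapsto\sigma_T$ via the column reading order, the observation that swapping the entries $i$ and $i+1$ is left multiplication by the transposition $(i,i+1)$, and the one-inversion count that orients the edge. Note that your inversion computation reproduces the content of Lemma~\ref{lemma: min-length path in graph}, which the paper proves separately immediately afterward, so nothing further is needed.
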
 

Note that this is not an induced graph: there are edges corresponding to non-simple reflections in the Bruhat graph (and even in the Hasse diagram of Bruhat order) that do not appear in the tableau graph.  

\begin{remark}
The above discussion is a concise version of \cite[Theorem 8.1]{pagnon2007adjacency}.  The bijection between permutations and tableau used by Pagnon-Ressayre relates to ours by change of base point, since they do not associate the identity permutation to $T_0$ \cite[Theorem 8.9]{pagnon2007adjacency}. 
\end{remark}

Recall that the number of {\em inversions} of a permutation in one-line notation is the number of instances of a pair $\cdots i \cdots j \cdots$ with $i>j$.  Also recall that a product $s_{j_1} s_{j_2} \cdots s_{j_k}$ of simple reflections is called {\em reduced} if there is no way to write the same permutation as a product of fewer simple reflections.

\begin{lemma}\label{lemma: min-length path in graph}
If there is a directed edge $T \stackrel{s_{i}}{\longrightarrow} T'$ in the tableau graph then the number of inversions of $\sigma_T$ is one fewer than the number of inversions of $\sigma_{T'}$.  

Suppose that $T \stackrel{s_{i_1}}{\longrightarrow} T_1\stackrel{s_{i_2}}{\longrightarrow} T_2 \cdots \stackrel{s_{i_k}}{\longrightarrow} T_k$ is a directed path in the tableau graph.  Then $s_{i_1} s_{i_2} \cdots s_{i_k}$ is a reduced word in the permutation group.
\end{lemma}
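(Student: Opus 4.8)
The plan is to translate everything about the tableau graph into statements about permutations, where the claims become routine facts about inversions and reduced words in $S_{2n}$. The first observation is that the reading order of Definition~\ref{definition: permutation of tableau} depends only on the boxes, not on the filling, so if $T' = s_i \cdot T$ then the word $\sigma_{T'}$ is obtained from $\sigma_T$ by interchanging the \emph{values} $i$ and $i+1$ wherever they occur; equivalently $\sigma_{T'} = s_i\,\sigma_T$. Hence a directed edge $T \stackrel{s_i}{\longrightarrow} T'$ of the tableau graph is exactly an edge $\sigma_T \leftrightarrow s_i\sigma_T$ of the Bruhat graph as in the proposition identifying the tableau graph as a subgraph of the Bruhat graph, and since $s_i$ is a simple reflection the inversion numbers $\mathrm{inv}(\sigma_T)$ and $\mathrm{inv}(\sigma_{T'})$ differ by exactly one. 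All that is left for the first assertion is to check that the inversion number goes \emph{up} in the direction of the arrow.

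To pin down that sign, recall that the direction of the edge is the one with $i$ below $i+1$ in $T$: say $i$ sits in row $2$, column $a$ and $i+1$ sits in row $1$, column $b$. If $a \ge b$ then the entry of $T$ in row $1$, column $a$ would be strictly less than $i$ (it lies directly above $i$) and simultaneously at least $i+1$ (it lies weakly to the right of $i+1$ in the strictly increasing first row), which is impossible; so $a < b$. Reading columns left to right and, within a column, top box before bottom box, the entry $i$ (at the bottom of column $a$) is therefore read before $i+1$ (at the top of column $b$), i.e. $i$ precedes $i+1$ in the one-line notation of $\sigma_T$. By the standard length formula in the symmetric group \cite[Ch.~1]{MR2133266}, if $i$ precedes $i+1$ in $\sigma$ then $\mathrm{inv}(s_i\sigma) = \mathrm{inv}(\sigma)+1$; taking $\sigma = \sigma_T$ yields $\mathrm{inv}(\sigma_{T'}) = \mathrm{inv}(\sigma_T)+1$, which is the first claim.

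For the second assertion, iterate along the path $T \stackrel{s_{i_1}}{\longrightarrow} T_1 \stackrel{s_{i_2}}{\longrightarrow} \cdots \stackrel{s_{i_k}}{\longrightarrow} T_k$: each step raises the inversion number by one, so $\mathrm{inv}(\sigma_{T_k}) = \mathrm{inv}(\sigma_T)+k$, while unwinding $\sigma_{T_j} = s_{i_j}\sigma_{T_{j-1}}$ gives $\sigma_{T_k} = u\,\sigma_T$ with $u = s_{i_k}s_{i_{k-1}}\cdots s_{i_1}$. Using that the inversion number is subadditive ($\mathrm{inv}(ab) \le \mathrm{inv}(a)+\mathrm{inv}(b)$, a standard fact; see \cite{MR2133266}) and that $u$ is displayed as a product of $k$ simple reflections (so $\mathrm{inv}(u) \le k$), we get $k = \mathrm{inv}(\sigma_{T_k}) - \mathrm{inv}(\sigma_T) = \mathrm{inv}(u\sigma_T) - \mathrm{inv}(\sigma_T) \le \mathrm{inv}(u) \le k$. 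Hence $\mathrm{inv}(u) = k$, so no shorter product of simple reflections equals $u$; that is, $u$ is reduced, and therefore so is its inverse word $s_{i_1}s_{i_2}\cdots s_{i_k}$. There is no serious obstacle here: the only step carrying genuine combinatorial content is the column comparison in the second paragraph, which is exactly what converts the edge-direction condition ``$i$ below $i+1$'' into the sign of the change in inversion number; everything else is formal bookkeeping with the Bruhat-graph identification and standard Coxeter-theoretic facts.
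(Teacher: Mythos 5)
Your proof is correct and follows essentially the same route as the paper: identify the edge with left multiplication by $s_i$ in the Bruhat graph, use the column positions of $i$ and $i+1$ in a standard tableau to see that $i$ precedes $i+1$ in the reading word so the inversion number rises by exactly one, and then iterate along the path. You are in fact slightly more careful than the paper at the final step, where you invoke subadditivity of length to pass from $\mathrm{inv}(\sigma_{T_k})=\mathrm{inv}(\sigma_T)+k$ to the conclusion that the word of simple reflections is reduced.
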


\begin{proof}
If an edge labeled $s_i$ is directed out of the tableau $T$ then $i$ is in the second row of $T$ and $i+1$ is in the first row.  Since $T$ is standard $i$ must be in a column to the left of the column of $i+1$ and so $i$ is to the left of $i+1$ in $\sigma_T$.  Thus the pair $i,i+1$ do not contribute an inversion in $\sigma_T$ and by the same argument they do in $\sigma_{T'}$.  The rest of the entries are the same so the first claim holds.  We can restate the second claim as follows: 
\begin{itemize}
\item the permutation $\sigma_{T_k}$ has $k$ more inversions than $\sigma_T$ and
\item the permutations satisfy $\sigma_{T_k} = s_{i_1} s_{i_2} \cdots s_{i_k} \sigma_T$.  
\end{itemize}
The permutation $s_{i_1} s_{i_2} \cdots s_{i_k}$ thus has $k$ inversions and so is a reduced word \cite[Section 1.4, Proposition 1.5.2]{MR2133266}.
\end{proof}

This result says that directed paths in the tableau graph correspond to reduced words for permutations.  However note that there could be reduced words for a permutation that can't be constructed from paths in the tableau graph, since some simple reflections label edges that are not in the tableau graph.

We close this section by noting that distance in the graph is counted by the inversions in $\sigma_T$.

\begin{corollary}\label{corollary: distance is number of inversions}
For each tableau $T$ in the graph, the distance $dist(T_0,T)$ between $T$ and $T_0$ is the same as the number of inversions of $\sigma_T$.
\end{corollary}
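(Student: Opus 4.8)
The plan is to combine Lemma \ref{lemma: min-length path in graph} with the structural fact that $\Gamma^{\mathcal{T}_n}$ is a finite directed acyclic graph whose unique source is $T_0$. First I would record the base case: by Definition \ref{definition: permutation of tableau} the permutation $\sigma_{T_0}$ is the identity, which has zero inversions.

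Next I would establish the lower bound $dist(T_0,T) \ge \mathrm{inv}(\sigma_T)$, where $\mathrm{inv}$ denotes the number of inversions. By the first part of Lemma \ref{lemma: min-length path in graph}, each directed edge of the tableau graph changes the inversion number by exactly one; hence each edge of the underlying \emph{undirected} graph joins two tableaux whose associated permutations differ in inversion number by exactly one. Consequently, along any walk from $T_0$ to $T$ the inversion number changes by $\pm 1$ at each step, and since it starts at $0$ and finishes at $\mathrm{inv}(\sigma_T)$, such a walk must have at least $\mathrm{inv}(\sigma_T)$ steps. This gives $dist(T_0,T) \ge \mathrm{inv}(\sigma_T)$.

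For the matching upper bound I would exhibit an actual directed path from $T_0$ to $T$ of length $\mathrm{inv}(\sigma_T)$. The key point is that in a finite acyclic digraph every vertex is the endpoint of a directed path starting at some source: beginning at $T$ and walking backward along edges, acyclicity and finiteness force termination at a vertex with no incoming edge, i.e. a source. Since the Pagnon--Ressayre proposition (together with the earlier lemma identifying $T_0$) tells us $T_0$ is the \emph{unique} source, that vertex is $T_0$, so there is a directed path $T_0 \stackrel{s_{i_1}}{\longrightarrow} \cdots \stackrel{s_{i_k}}{\longrightarrow} T$. Applying the first part of Lemma \ref{lemma: min-length path in graph} along this path, together with the base case $\mathrm{inv}(\sigma_{T_0}) = 0$, shows its length $k$ equals $\mathrm{inv}(\sigma_T)$. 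Hence $dist(T_0,T) \le \mathrm{inv}(\sigma_T)$, and combined with the previous paragraph we get equality.

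The only genuine subtlety is the reachability claim used in the last paragraph: it relies on acyclicity and the uniqueness of the source $T_0$, not merely on connectivity of the undirected graph. Everything else is routine bookkeeping with inversion counts, so I expect that reachability step to be the main (if mild) obstacle.
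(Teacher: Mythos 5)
Your proof is correct and takes essentially the same route as the paper: both arguments rest on Lemma \ref{lemma: min-length path in graph}, which forces any directed path from $T_0$ to $T$ to have length equal to the number of inversions of $\sigma_T$. You simply make explicit two points the paper's one-line proof leaves implicit --- that every vertex is reachable by a directed path from the unique source $T_0$, and that no shorter (undirected) walk can exist because each edge changes the inversion count by exactly one.
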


\begin{proof}
Let $T$ be a tableau.  The distance $dist(T_0,T)$ is the length of a directed path from $T_0 \stackrel{\sigma}{\longrightarrow} T$.  By Lemma \ref{lemma: min-length path in graph} this is also the length of the permutation $\sigma$ which is by definition $\sigma_T$.
\end{proof}







\section{Nesting and the partial order in the web graph} \label{section: nesting}

This section explores the partial order from the perspective of the web graph.  The main results prove that the partial order (and the statistics that arise from it, including distance from the source) can be interpreted easily in terms of a graphical property of webs.  This property, called nesting, is defined next.

\begin{definition}\label{definition: nesting}
For each arc $a$ in a web $w$, let $n(a)$ be the number of arcs that $a$ is nested beneath in $w$. Define the nesting number $n(w)$ of a web to be the sum of the values $n(a)$ taken over all arcs in $w$. 
\end{definition}

The next result shows how nesting numbers change when exactly two arcs are nested or unnested in a particular web. In the proof, we use the notion of an umbrella arc defined as follows. 

\begin{definition}
Let $a=(r,s)$ be an arc in the web $w$.  The arc $b=(r',s')$ is the umbrella arc of $a$ if $r'<r<s<s'$ and there is no arc $c=(r'',s'')$ with $r'<r''<r<s<s''<s'$.  If there is no such arc, we say $a$ has no umbrella arc.
\end{definition}

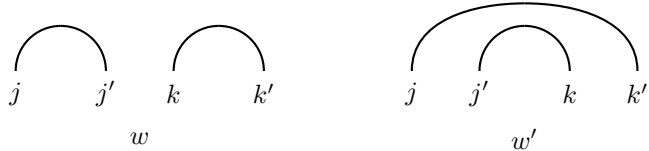
\begin{figure}[h]
\begin{tikzpicture}[baseline=0cm, scale=0.3]
\draw[style=thick] (4,0) to[out=90, in=0] (2,2);
\draw[style=thick] (2,2) to[out=180, in=90] (0,0);
\draw[style=thick] (11,0) to[out=90, in=0] (9,2);
\draw[style=thick] (9,2) to[out=180, in=90] (7,0);
\node at (0,-1) {$j$};
\node at (4,-1) {$j'$};
\node at (7,-1) {$k$};
\node at (11,-1) {$k'$};
\node at (5.5,-3) {{\bf $w$}};
\end{tikzpicture}
\hspace{.5in} 
\begin{tikzpicture}[baseline=0cm, scale=0.3]
\draw[style=thick] (10,0) to[out=90, in=0] (5,3);
\draw[style=thick] (5,3) to[out=180, in=90] (0,0);
\draw[style=thick] (7,0) to[out=90, in=0] (5,2);
\draw[style=thick] (5,2) to[out=180, in=90] (3,0);
\node at (0,-1) {$j$};
\node at (3,-1) {$j'$};
\node at (7,-1) {$k$};
\node at (10,-1) {$k'$};
\node at (5,-3) {{\bf $w'$}};
\end{tikzpicture}
\caption{$n(w')=n(w) + k-j'$}\label{figure:general nesting}
\end{figure}

\begin{lemma}\label{lemma: changing one nest or unnest}
Suppose $w$ is a web with arcs $(j,j')$ and $(k,k')$ where $j'<k$, and $w'$ is the web with arcs $(j,k')$ and $(j',k)$ that is otherwise identical to $w$. Then $n(w') = n(w) + k-j' \geq n(w)+1$.
\end{lemma}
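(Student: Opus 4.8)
The plan is to track exactly how the nesting count $n(\cdot)$ changes as we pass from $w$ to $w'$. Since $w$ and $w'$ agree outside the four endpoints $j < j' < k < k'$, and they share all the same arcs except for the pair $(j,j'),(k,k')$ versus $(j,k'),(j',k)$, I would split the contributions to $n(w)$ and $n(w')$ into three pieces: (i) the contribution of arcs that are entirely unrelated to these four points (more precisely, arcs not separating or separated by the changed arcs); (ii) the mutual nesting of the two changed arcs with each other; and (iii) the interaction between the two changed arcs and all other arcs of the web.

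For piece (ii): in $w$ the arcs $(j,j')$ and $(k,k')$ are unnested (since $j<j'<k<k'$), contributing $0$; in $w'$ the arc $(j',k)$ is nested directly beneath $(j,k')$, contributing $1$. So this piece contributes $+1$ to $n(w') - n(w)$. For piece (iii), I would argue that for any arc $c=(r,s)$ of the web other than the changed ones, the quantity ``number of changed arcs that $c$ nests beneath, plus number of changed arcs nested beneath $c$'' is the same in $w$ and $w'$ unless exactly one of $r,s$ lies strictly between $j'$ and $k$. Indeed, nesting between $c$ and the changed arcs depends only on how $\{r,s\}$ is positioned relative to the intervals $[j,j'],[k,k']$ in $w$ and $[j,k'],[j',k]$ in $w'$; if both endpoints of $c$ avoid the open interval $(j',k)$, one checks (by the noncrossing condition, a short case analysis on whether $r,s$ lie left of $j$, in $[j,j']$-ish regions, or right of $k'$) that $c$'s nesting relationship with the union of the changed arcs is unaffected. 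The only vertices whose ``side'' changes between the two configurations are those in $(j',k)$: such a vertex is, in $w$, outside both changed arcs, but in $w'$ it is inside $(j,k')$ and also inside $(j',k)$. Because arcs are noncrossing, an arc $c$ with exactly one endpoint in $(j',k)$ cannot exist (its other endpoint would force a crossing with $(j',k)$ or $(j,k')$); so in fact every arc $c$ distinct from the changed ones either has both endpoints in the open interval $(j',k)$ or neither. An arc with neither contributes equally (as argued); an arc with both endpoints in $(j',k)$ contributes $0$ extra in $w$ but $+2$ extra in $w'$, since in $w'$ it lies beneath both $(j,k')$ and $(j',k)$. The number of such "interior" arcs is exactly the number of vertices strictly between $j'$ and $k$, divided by $2$; call that count $m$, so piece (iii) contributes $+2m$.

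Counting vertices: the open interval $(j',k)$ on the integer axis contains the $k - j' - 1$ vertices $j'+1, \ldots, k-1$. Since each is an endpoint of an arc lying wholly inside $(j',k)$ (no arc crosses out, by the noncrossing argument above), this number is even, $k - j' - 1 = 2m$, so $m = (k - j' - 1)/2$. Therefore
\[
n(w') - n(w) = 1 + 2m = 1 + (k - j' - 1) = k - j'.
\]
Finally $k - j' \geq 1$ since $j' < k$, giving $n(w') \geq n(w) + 1$. I expect the main obstacle to be piece (iii): writing the case analysis cleanly enough to be convincing that (a) no arc straddles the boundary of $(j',k)$, and (b) an arc not inside $(j',k)$ has identical nesting relations with the changed pair before and after. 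The noncrossing property is doing all the work here, and the argument amounts to observing that the four points $j,j',k,k'$ together with any fifth point are rigidly constrained, so a clean statement is: partition the other arcs by whether their endpoints lie in $(j',k)$, and check the two resulting cases. Matching the figure's claim $n(w') = n(w) + k - j'$ then drops out of the vertex count.
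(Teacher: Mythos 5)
Your proof is correct and follows essentially the same route as the paper's: a case analysis on the position of each remaining arc relative to the four endpoints $j<j'<k<k'$, showing that only arcs with both endpoints strictly between $j'$ and $k$ change their nesting count (each by $+2$, and there can be no arc with exactly one endpoint there), plus the $+1$ from the new nested pair $(j',k)$ beneath $(j,k')$. The paper organizes the identical cases via umbrella arcs rather than your direct counting of nested pairs, and your final tally $1+(k-j'-1)=k-j'$ matches the stated formula.
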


\begin{proof}
If there were an arc in $w$ with exactly one endpoint between $j'$ and $k$ then this arc would cross $(j,k')$ in $w'$ violating our hypotheses. It follows that $(j,j')$ and $(k,k')$ in $w$ and $(j,k')$ in $w'$ all have the same umbrella arc and so 
\[n_w((j,j'))=n_w((k,k'))=n_{w'}((j,k')).\] 
Observe that $(j',k)$  has umbrella arc $(j,k')$ in $w'$ and hence $n_{w'}((j', k)) =n_{w'}((j,k'))+1$. 

Now let $a=(r,s)$ be an arc common to $w$ and $w'$. We compare $n_w(a)$ and $n_{w'}(a)$.
\begin{itemize}
\item If $a$ is to the left of $j$ (namely $r<s<j$), to the right of $k'$ (namely $k'<r<s$), or above the arcs (namely $r<j<k'<s$) then $n_w(a)=n_{w'}(a)$. 
\item If $a$ is beneath arc $(j,j')$ in $w$ then $j<r<s<j'$ and hence $a$ is beneath arc $(j,k')$ but not $(j',k)$ in $w'$. Thus $n_w(a)=n_{w'}(a)$. If $k<r<s<k'$ then by a symmetric argument $n_w(a)=n_{w'}(a)$. 
\item If $a$ is between arcs $(j,j')$ and $(k,k')$ in $w$ then $j'<r<s<k'$. Therefore $a$ has the same umbrella arc as $(j,j')$ and $(k,k')$ in $w$ and so 
\[n_w(a) = n_w((j,j'))=n_w((k,k')).\] 
However $a$ has umbrella arc $(j',k)$ in $w'$ so $n_{w'}(a) = n_w(a)+2$.
\end{itemize}
Summing over all arcs in $w'$ we conclude that 
\[n(w') = n(w) + 1 + 2\left(\frac{k'-j-1}{2}\right) = n(w) + k'-j\]
\end{proof}

This leads to a collection of small but important corollaries about how the nesting number changes in the web graph.

\begin{corollary}\label{corollary: nesting number grows by one}
Suppose that $w \rightarrow w'$ is an edge in the web graph.  Then $n(w')=n(w)+1$.
\end{corollary}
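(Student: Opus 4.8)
The plan is to read off the edge condition in the web graph and check that it is a special case of the hypotheses of Lemma \ref{lemma: changing one nest or unnest}. Recall that a directed edge $w \stackrel{s_i}{\longrightarrow} w'$ in $\Gamma^{\mathcal{W}_n}$ exists precisely when $s_i \cdot w = w + w'$ and $w$ contains a pair of \emph{unnested} arcs $(j,i)$ and $(i+1,k)$; by the definition of the $S_{2n}$-action on web space, $w'$ is then the web obtained from $w$ by replacing those two arcs with $(i,i+1)$ and $\{j,k\}$, leaving every other arc unchanged. So in the notation of Lemma \ref{lemma: changing one nest or unnest}, I would set $j' = i$ and (using that $j < i < i+1 < k$ since the arcs $(j,i)$ and $(i+1,k)$ are unnested) identify $w$ as the web with arcs $(j,j') = (j,i)$ and $(i+1,k)$, and $w'$ as the web with arcs $(j,k)$ and $(j', i+1) = (i,i+1)$. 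Since $j' = i$ and $i+1$ play the roles of $j'$ and $k$ in the lemma's statement, the quantity $k - j'$ there equals $(i+1) - i = 1$.

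Thus the first step is simply to observe that $j' < k$ holds in this situation (it is the unnestedness condition, which forces $j < i < i+1 < k$, hence $j' = i < i+1 = k_{\mathrm{lemma}}$), so Lemma \ref{lemma: changing one nest or unnest} applies. The second step is to apply the lemma's conclusion $n(w') = n(w) + (k_{\mathrm{lemma}} - j'_{\mathrm{lemma}})$ with the two relevant endpoints being the consecutive integers $i$ and $i+1$, which gives $n(w') = n(w) + 1$. One should be slightly careful matching names: the lemma uses $(j,j')$ and $(k,k')$ with $j' < k$, and replaces them by $(j,k')$ and $(j',k)$; here the two arcs of $w$ incident to $i, i+1$ are $(j,i)$ and $(i+1,k')$ for some endpoints, and after the move $w'$ has $(j,k')$ and $(i,i+1)$. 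So the lemma's internal $j'$ is our $i$ and the lemma's internal $k$ is our $i+1$, and $k - j' = 1$ exactly because $i$ and $i+1$ are consecutive.

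I do not anticipate a genuine obstacle here: the whole content is bookkeeping to align the edge condition of $\Gamma^{\mathcal{W}_n}$ with the hypotheses of the previous lemma. The one point requiring a sentence of care is the direction/labelling convention — namely that the ``new'' short arc created when traversing the edge is exactly $(i,i+1)$, so that the two endpoints whose difference governs the nesting increment are forced to be consecutive integers. Once that is spelled out, the computation $n(w') = n(w) + (i+1) - i = n(w) + 1$ is immediate from Lemma \ref{lemma: changing one nest or unnest}.
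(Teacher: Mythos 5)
Your proposal is correct and matches the paper's own proof: both identify the edge condition as the replacement of unnested arcs $(j,i)$ and $(i+1,k)$ by $(j,k)$ and $(i,i+1)$, and then invoke Lemma \ref{lemma: changing one nest or unnest} with the two inner endpoints being the consecutive integers $i$ and $i+1$, so the increment $k-j'$ from that lemma equals $1$. Your explicit bookkeeping of which endpoints play the roles of the lemma's $j'$ and $k$ is exactly the (implicit) content of the paper's one-line argument.
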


\begin{proof}
Since $w \rightarrow w'$ is an edge in the web graph, there is a simple transposition $s_i$ with $s_i\cdot w = w+w'$. Furthermore the webs $w$ and $w'$ are identical except that $w$ has arcs $(j,i)$ and $(i+1, k)$ while $w'$ has arcs $(j,k)$ and $(i,i+1)$.  The result follows from Lemma \ref{lemma: changing one nest or unnest}.
\end{proof}

It follows that distance from the web $w_0$ is the same as the nesting number.

\begin{corollary}
In the web graph the distance and nesting number are the same, and both are the same as the number of inversions of the permutation corresponding to $T$ in Definition \ref{definition: permutation of tableau}:
\[\textup{dist}(w_0, w) = n(w) = \# \textup{ inversions of } \sigma_{\psi^{-1}(w)}\]
\end{corollary}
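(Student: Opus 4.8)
The plan is to deduce the statement from three facts already in hand, chained together; the only equality requiring an argument is the middle one, $\textup{dist}(w_0,w)=n(w)$, and for that I would combine Corollary~\ref{corollary: nesting number grows by one} with the structure of the web graph as a directed acyclic graph with a unique source.

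First I would record the base case $n(w_0)=0$: the web $w_0$ has arcs $(1,2),(3,4),\ldots,(2n-1,2n)$, no one of which is nested beneath another, so every summand $n(a)$ in Definition~\ref{definition: nesting} is zero. Next, since $\Gamma^{\mathcal{W}_n}$ is a finite directed acyclic graph (by the Pagnon-Ressayre proposition), starting from an arbitrary web $w$ and repeatedly following edges backward must terminate at a vertex of in-degree $0$, i.e.\ at a source; because $w_0$ is the \emph{unique} source, this yields a directed path $w_0\to\cdots\to w$, so every vertex is reachable from $w_0$. Now along any directed path $w_0=u_0\to u_1\to\cdots\to u_\ell=w$, Corollary~\ref{corollary: nesting number grows by one} gives $n(u_m)=n(u_{m-1})+1$ for each $m$, and telescoping gives $\ell=n(w)-n(w_0)=n(w)$. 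Thus every directed path from $w_0$ to $w$ has the same length $n(w)$, and hence $\textup{dist}(w_0,w)=n(w)$.

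For the remaining equality I would invoke Theorem~\ref{theorem: graph isomorphism}: the bijection $\psi$ is an isomorphism of directed graphs sending $T_0$ to $w_0$, so it carries directed paths $T_0\to\psi^{-1}(w)$ in $\Gamma^{\mathcal{T}_n}$ bijectively and length-preservingly onto directed paths $w_0\to w$ in $\Gamma^{\mathcal{W}_n}$. Therefore $\textup{dist}(w_0,w)=\textup{dist}(T_0,\psi^{-1}(w))$, and Corollary~\ref{corollary: distance is number of inversions} identifies the latter with the number of inversions of $\sigma_{\psi^{-1}(w)}$. Assembling these gives the asserted triple equality.

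I expect the only genuinely delicate point to be the claim that $w$ is reachable from $w_0$ by an honest directed path, which is precisely where the ``unique source in a finite acyclic graph'' hypothesis does its work; once reachability is secured, Corollary~\ref{corollary: nesting number grows by one} forces every such path to have length exactly $n(w)$, and the other two identifications are purely formal consequences of the directed graph isomorphism and Corollary~\ref{corollary: distance is number of inversions}.
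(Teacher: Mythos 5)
Your argument is correct and follows essentially the same route as the paper: both establish $\textup{dist}(w_0,w)=n(w)$ by applying Corollary~\ref{corollary: nesting number grows by one} along a directed path from the unique source $w_0$ (the paper phrases this as an induction on distance, you as a telescoping sum, which is the same computation), and both then invoke Corollary~\ref{corollary: distance is number of inversions} for the final equality. Your explicit justification of reachability from $w_0$ via the finite-DAG/unique-source argument is a detail the paper leaves implicit, but it is not a different method.
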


\begin{proof}
We prove this inductively. The base case is true since $n(w_0) = 0$.  Consider a web $w'$ that is distance $k$ from $w_0$. Then there is an edge $w \rightarrow w'$ from a web $w$ whose distance from $w_0$ is $n(w)=k-1$ by the inductive hypothesis. Corollary \ref{corollary: nesting number grows by one} and Corollary \ref{corollary: distance is number of inversions} complete the proof.
\end{proof}

Moreover there are only two ways the nesting number can change when acting on a web by $s_i$.

\begin{corollary} \label{corollary: nesting options across one edge}
Assume that $s_i\cdot w = w+w'$ where $w\neq w'$. Then either $n(w)+1=n(w')$ or $n(w)>n(w')$. 
\end{corollary}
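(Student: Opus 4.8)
The plan is to reduce Corollary~\ref{corollary: nesting options across one edge} to Lemma~\ref{lemma: changing one nest or unnest} by a careful case analysis on the cyclic order of the three vertices involved in the move. Write the two arcs of $w$ incident to $i$ and $i+1$ as $\{i,a\}$ and $\{i+1,b\}$; by the definition of the $S_{2n}$-action, $w'$ is obtained by replacing these with $(i,i+1)$ and $\{a,b\}$, and everything else is unchanged. The whole question is then: given the relative positions of $a$, $i$, $i+1$, $b$ along the axis (subject to $w$ being a valid noncrossing matching), does the nesting number go up by exactly $1$, or does it go down (by some amount)? Lemma~\ref{lemma: changing one nest or unnest} already handles the ``up by $k-j'$'' direction precisely when the configuration in $w$ is a pair of \emph{unnested} arcs with $(i,i+1)$ landing \emph{inside}; the symmetric statement (reading the lemma with the roles of $w$ and $w'$ swapped) handles the reverse move.

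First I would dispose of the degenerate subcase: if $\{i,a\}$ and $\{i+1,b\}$ are actually the same arc, i.e. $a = i+1$ (so $(i,i+1)$ is already an arc of $w$), then $s_i\cdot w = -w$ and there is no new web $w'$, contradicting $w \neq w'$, so this never arises. Next, since arcs of $w$ do not cross and since $i$ and $i+1$ are adjacent on the axis, the arc $\{i,a\}$ has $a<i$ or $a>i+1$, and likewise for $\{i+1,b\}$; moreover noncrossing-ness forces the configuration to be one of exactly two shapes: either $a < i < i+1 < b$ with the two arcs \emph{unnested} as in Figure~\ref{figure:general nesting} (here $a$ plays the role of $j'$ or $k$ and $b$ the role of $j$ or $k'$), or else one arc nests under the other, e.g. $a < i < i+1 < b$ is impossible-to-be-nested but $b < i$ or $a > i+1$ gives a nested pair. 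So really there are just two essential cases up to the left-right symmetry: (i) $w$ has $(a,i)$ and $(i+1,b)$ unnested with $a<i<i+1<b$, and (ii) $w$ has $(i,i+1)$-to-be nested inside, which after relabeling is exactly the $w'$ side of Lemma~\ref{lemma: changing one nest or unnest}.

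In case (i), applying Lemma~\ref{lemma: changing one nest or unnest} directly (with $j'=a$, $k=i+1$, $j=$ the outer left endpoint, $k'=b$, or the mirror) gives $n(w') = n(w) + (i+1) - a \ge n(w)+1$, so the first alternative holds. In case (ii), we are going from a web in which a short arc $(i,i+1)$ is nested under an arc $(a,b)$ to the web in which those two arcs are replaced by the two unnested arcs $(a,i)$ and $(i+1,b)$; this is precisely the move of Lemma~\ref{lemma: changing one nest or unnest} run backwards, so $n(w) = n(w') + (i+1) - i' $ for the appropriate label, hence $n(w) > n(w')$, giving the second alternative. I expect the main obstacle to be purely bookkeeping: enumerating the possible relative orders of $a,i,i+1,b$ and checking that noncrossing-ness of $w$ really does collapse them to the two cases above (in particular ruling out configurations where $a$ and $b$ are on the same side of $\{i,i+1\}$ but the arcs are unnested, which would force a crossing). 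Once the case list is pinned down, each case is an immediate citation of Lemma~\ref{lemma: changing one nest or unnest} in one direction or the other, with no new computation.
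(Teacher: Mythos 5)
Your overall strategy is the same as the paper's: rule out $(i,i+1)$ being an arc of $w$, use the noncrossing condition to reduce to one unnested configuration and two mirror-image nested configurations, and cite Lemma~\ref{lemma: changing one nest or unnest} in the forward direction for the first and in reverse for the others. The nested case is fine: there only the inequality $n(w) \geq n(w') + 1$ from the lemma is needed, so the imprecision in your labels is harmless. (The paper packages the unnested case as Corollary~\ref{corollary: nesting number grows by one} rather than citing the lemma directly, but that is the same argument.)

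However, your unnested case as written does not prove what the corollary asserts. You set $j'=a$ and $k=i+1$ and conclude $n(w') = n(w) + (i+1) - a \geq n(w)+1$, then claim ``the first alternative holds.'' The first alternative is the \emph{equality} $n(w') = n(w)+1$, and $(i+1)-a$ exceeds $1$ whenever $a<i$, so your formula would actually contradict the corollary rather than establish it. The error is in the identification of endpoints: in Lemma~\ref{lemma: changing one nest or unnest} the increment $k-j'$ is the gap between the two \emph{inner} endpoints of the unnested pair, and with arcs $(a,i)$ and $(i+1,b)$ one must take $(j,j')=(a,i)$ and $(k,k')=(i+1,b)$, so $j'=i$, $k=i+1$, and the increment is exactly $(i+1)-i=1$. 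That adjacency of $i$ and $i+1$ is the entire content of the ``$+1$ exactly'' alternative; with the correct substitution your argument goes through, but as stated the key step fails.
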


\begin{proof}
Note that $(i,i+1)$ is not an arc in $w$ since otherwise $s_i\cdot w = -w$. Thus $w$ has two distinct arcs incident on $i$ and $i+1$, say with (unordered) endpoints $\{i,j\}$ and $\{i+1, j'\}$. 

If $j<i<i+1<j'$ then the arcs are unnested. Thus there is an edge $w\rightarrow w'$ in the web graph labeled $s_i$ and so the nesting number increases as $n(w)+1 = n(w')$ by Corollary \ref{corollary: nesting number grows by one}. 

If not then either $j'<j<i<i+1$ or $i<i+1<j'<j$ since the arcs are noncrossing. This time Lemma \ref{lemma: changing one nest or unnest} implies that the nesting number drops.  
\end{proof}

In fact the previous result could be refined since Lemma \ref{lemma: changing one nest or unnest} says that the nesting number drops by one if $j=i-1$  and $j'<j<i<i+1$, by one if $j'=i+2$ and $i<i+1<j'<j$, and by at least three for any other in any other case when $j'<j<i<i+1$ or $i<i+1<j'<j$.

The next collection of results will further analyze the partial order $\preceq$ and show that certain natural web properties (e.g. two arcs are nested or unnested) can be interpreted directly in terms of $\preceq$. First we show that in the web graph at most one edge labeled $s_i$ is directed into a given web $w$. 

\begin{lemma}
Let $w\in \mathcal{W}_n$ with arc $(i,i+1)$. If $(i,i+1)$ has an umbrella arc then there is exactly one web $w'$ for which there is an edge $w' \rightarrow w$ labeled $s_i$. If $(i,i+1)$ has no umbrella arc then there is no edge labeled $s_i$ directed into $w$. 
\end{lemma}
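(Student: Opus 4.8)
The plan is to translate the existence of an incoming edge labeled $s_i$ into a purely combinatorial statement about $w$, and then to match that statement with the umbrella arc of $(i,i+1)$, using uniqueness of the umbrella arc to get the count. First I would unwind the definitions. An edge $w' \to w$ labeled $s_i$ exists precisely when $s_i\cdot w' = w'+w$ and $w'$ contains a pair of unnested arcs $(j,i)$ and $(i+1,k)$, so $j<i<i+1<k$. By the formula for the $S_{2n}$-action, this forces $w$ to be obtained from $w'$ by deleting the arcs $(j,i)$ and $(i+1,k)$ and inserting the arcs $(i,i+1)$ and $(j,k)$, all other arcs unchanged. In particular $w$ contains an arc $(j,k)$ with $j<i<i+1<k$ lying over $(i,i+1)$ (this is also the content of the Remark following the definition of the web graph), and $w'$ is recovered from $w$ by the reverse replacement. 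So describing incoming $s_i$-edges of $w$ is the same as describing which arcs $(j,k)$ of $w$ lying over $(i,i+1)$ arise this way.

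Second, I would show that any such arc $(j,k)$ must be \emph{the} umbrella arc of $(i,i+1)$ in $w$. If not, there is an arc $(p,q)$ of $w$ with $j<p<i<i+1<q<k$; being distinct from $(i,i+1)$ and $(j,k)$, it is also an arc of $w'$. But in $w'$ the arc $(j,i)$ has endpoints $j$ and $i$, and $(p,q)$ satisfies $j<p<i<q$, so its endpoints separate those of $(j,i)$ and the two arcs cross in $w'$, contradicting that $w'$ is a web. Hence $(j,k)$ is forced to be the umbrella arc, which is unique: two candidates would be noncrossing arcs both straddling $(i,i+1)$, hence nested one inside the other, and the inner one would violate the umbrella condition for the outer one. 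This already yields uniqueness of $w'$: there is at most one incoming $s_i$-edge, determined by the reverse replacement at the umbrella arc. It also settles the no-umbrella case, since if $(i,i+1)$ has no umbrella arc then $w$ has no arc with one endpoint $<i$ and the other $>i+1$ at all (an innermost such arc would be an umbrella arc), so by the first paragraph no incoming $s_i$-edge can exist.

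Finally, for existence I would take the umbrella arc $(j,k)$ of $(i,i+1)$, define $w'$ by deleting $(i,i+1)$ and $(j,k)$ and inserting $(j,i)$ and $(i+1,k)$, and verify that $w'$ is a web. The key point is that every arc of $w$ other than $(i,i+1)$ lying strictly under $(j,k)$ has both endpoints in $\{j+1,\dots,i-1\}$ or both in $\{i+2,\dots,k-1\}$: an arc with one endpoint in each would straddle $(i,i+1)$ while sitting properly inside $(j,k)$, contradicting the umbrella property, and an arc with one endpoint in one of those ranges and the other outside $[j,k]$ would cross $(j,k)$. Arcs lying over $(j,k)$ or disjoint from $[j,k]$ cannot meet $(j,i)$ or $(i+1,k)$ either, so $w'$ is noncrossing; its arcs $(j,i)$ and $(i+1,k)$ are unnested, and $s_i\cdot w' = w'+w$ by the action formula, so $w'\to w$ is an edge labeled $s_i$. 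Together with the uniqueness above this proves the lemma.

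I expect the only genuine obstacle to be the second step: showing that the over-arc produced by an $s_i$-edge is exactly the umbrella arc rather than merely some arc containing $(i,i+1)$. This is where the noncrossing property of $w'$ does the real work, and it is simultaneously what forces uniqueness. The remaining verifications — that $w'$ is noncrossing and that the action formula applies — are routine position checks of an arc relative to $(i,i+1)$ and its umbrella arc, essentially the same bookkeeping as in the proof of Lemma \ref{lemma: changing one nest or unnest}.
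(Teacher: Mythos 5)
Your proof is correct, and it differs from the paper's in one substantive way: how uniqueness is established. The paper constructs the same web $w'$ (replace $(i,i+1)$ and the umbrella arc $(j,k)$ by $(j,i)$ and $(i+1,k)$) for existence, but for uniqueness it passes through the isomorphism with the tableau graph: since $s_i$ acts as a well-defined involution on the set of tableaux where it is defined, at most one edge incident to the corresponding tableau $T$ can carry the label $s_i$, and the exhibited incoming edge is therefore the only one. You instead stay entirely inside web combinatorics: you show that the over-arc $(j,k)$ produced by \emph{any} incoming $s_i$-edge must be the umbrella arc of $(i,i+1)$ --- because an arc $(p,q)$ with $j<p<i<i+1<q<k$ would survive into $w'$ and cross $(j,i)$ there --- and then invoke uniqueness of the umbrella arc. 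Both arguments are sound. The paper's route is shorter because it borrows the involution structure already established via Theorem \ref{theorem: graph isomorphism}; yours is self-contained and has the side benefit of explicitly verifying that the constructed $w'$ is noncrossing (a step the paper compresses into ``by construction''), which is exactly where the umbrella hypothesis is used. Your handling of the no-umbrella case matches the paper's: an incoming $s_i$-edge forces some arc over $(i,i+1)$, hence an innermost one, hence an umbrella arc.
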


\begin{proof}
The endpoint of any edge labeled $s_i$ is a web containing the arc $(i,i+1)$ where the nesting number of arc $(i,i+1)$ is at least one, so if $(i,i+1)$ has no umbrella arc then there is no edge $w' \rightarrow w$ labeled $s_i$.

If $(i,i+1)$ has an umbrella arc $(j,k)$ then consider the web $w'$ with arcs $(j,i)$ and $(i+1,k)$ that otherwise agrees with $w$.  On the one hand $s_i \cdot w' = w'+w$ by construction.  On the other hand suppose that $T'$ and $T$ are the tableaux corresponding to $w'$ and $w$ respectively and consider the tableau graph, which is isomorphic to the web graph.  The map $s_i$ is defined on $T$ because the fact that $s_i \cdot T' =T$ implies that $i$ and $i+1$ are in different rows and columns of $T$.  The map $s_i$ is a well-defined involution on its domain so exactly one edge incident to $T$ in the tableau graph is labeled $s_i$.  This proves the claim in this case.
\end{proof}

We can also count the number of ways of acting by $s_i$ to reduce nesting number and obtain a web $w$. We think of these as ``invisible backward edges" in the graph $\Gamma^{\mathcal{W}_n}$ as illustrated in Figure \ref{figure: detecting invisible edges}.

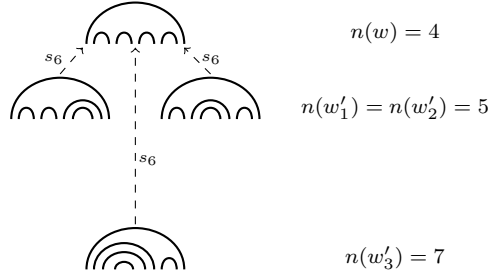
\begin{figure}[h]
\begin{tikzpicture}[scale=0.2]
\draw[style=thick] (-.5, 21.75) to[out=90, in=180] (0, 22.5) to[out=0, in=90] (.5, 21.75);
\draw[style=thick] (-2, 21.75) to[out=90, in=180] (-1.5, 22.5) to[out=0, in=90] (-1, 21.75);
\draw[style=thick] (2, 21.75) to[out=90, in=0] (1.5, 22.5) to[out=180, in=90] (1, 21.75);
\draw[style=thick] (2.5, 21.75) to[out=90, in=180] (3,22.5) to[out=0, in=90] (3.5, 21.75);
\draw[style=thick] (-2.5, 21.75) to[out=90, in=180] (.75, 24.5) to[out=0, in=90] (4, 21.75);

\draw[style=thick] (-5.5, 16.75) to[out=90, in=180] (-5, 17.5) to[out=0, in=90] (-4.5, 16.75);
\draw[style=thick] (-7, 16.75) to[out=90, in=180] (-6.5, 17.5) to[out=0, in=90] (-6, 16.75);
\draw[style=thick] (-4, 16.75) to[out=90, in=180] (-2.75, 18) to[out=0, in=90] (-1.5, 16.75);
\draw[style=thick] (-3.4, 16.75) to[out=90, in=180] (-2.75,17.5) to[out=0, in=90] (-2.1, 16.75);
\draw[style=thick] (-7.5, 16.75) to[out=90, in=180] (-4.25, 19.5) to[out=0, in=90] (-1, 16.75);

\draw[style=thick] (3, 16.75) to[out=90, in=180] (3.5, 17.5) to[out=0, in=90] (4, 16.75);
\draw[style=thick] (4.5, 16.75) to[out=90, in=180] (5.75, 18) to[out=0, in=90] (7, 16.75);
\draw[style=thick] (5.1, 16.75) to[out=90, in=180] (5.75, 17.5) to[out=0, in=90] (6.4, 16.75);
\draw[style=thick] (7.5, 16.75) to[out=90, in=180] (8,17.5) to[out=0, in=90] (8.5, 16.75);
\draw[style=thick] (2.5, 16.75) to[out=90, in=180] (5.75, 19.5) to[out=0, in=90] (9, 16.75);

\draw[style=thick] (-.6, 6.75) to[out=90, in=180] (0, 7.25) to[out=0, in=90] (.6, 6.75);
\draw[style=thick] (-2, 6.75) to[out=90, in=180] (0, 8.5) to[out=0, in=90] (2, 6.75);
\draw[style=thick] (-1.4, 6.75) to[out=90, in=180] (0, 7.9) to[out=0, in=90] (1.4, 6.75);
\draw[style=thick] (2.5, 6.75) to[out=90, in=180] (3,7.5) to[out=0, in=90] (3.5, 6.75);
\draw[style=thick] (-2.5, 6.75) to[out=90, in=180] (.75, 9.5) to[out=0, in=90] (4, 6.75);

\node at (18, 22.5) {\Small{$n(w)=4$}};
\node at (18, 17.5) {\Small{$n(w'_1)=n(w'_2)=5$}};
\node at (18, 7.5) {\Small{$n(w'_3)=7$}};

\draw[style=dashed, ->] (-4.25, 19.75) -- (-2.75, 21.5);
\draw[style=dashed, ->] (5.75, 19.75) -- (4, 21.5);
\draw[style=dashed, ->] (.75, 9.75)--(.75,21.5);

\node at (-4.65, 20.75) {\tiny{$s_6$}};
\node at (5.8, 20.75) {\tiny{$s_6$}};
\node at (1.6, 14) {\tiny{$s_6$}};
\end{tikzpicture}
\caption{There are three webs with $s_6\cdot w'_i = w'_i + w$ and $n(w_i')>n(w)$.}\label{figure: detecting invisible edges}
\end{figure}

\begin{lemma}\label{unnest-below}
Let $w\in \mathcal{W}_n$ with arc $(i,i+1)$. The number of webs $w'$ such that $s_i\cdot w' = w' + w$ and $n(w')>n(w)$ is the number of arcs in $w$ with the same umbrella arc as $(i,i+1)$. 
\end{lemma}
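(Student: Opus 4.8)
The plan is to reduce the statement to a count of certain arcs of $w$ by a sequence of three steps.

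\textbf{Step 1 (which $w'$ occur, via re-pairing).} First I would describe all webs $w'$ with $s_i\cdot w'=w'+w$. By the definition of the $S_{2n}$-action the web appearing in the sum $s_i\cdot w'$ always contains $(i,i+1)$, so if $s_i\cdot w'=w'+w$ with $w'\neq w$ then $w$ is exactly that web (consistent with the hypothesis that $(i,i+1)$ is an arc of $w$), while $w'$ contains two distinct arcs $\{i,x\}$ and $\{i+1,y\}$, agrees with $w$ elsewhere, and $\{x,y\}$ is an arc $a\neq(i,i+1)$ of $w$. Conversely, fix an arc $a\neq(i,i+1)$ of $w$: its two endpoints together with $i$ and $i+1$ are four distinct points on the line, admitting three perfect matchings, one equal to $\{(i,i+1),a\}$ and exactly one of the remaining two noncrossing; let $\rho(a)$ be the web obtained from $w$ by replacing $(i,i+1)$ and $a$ by that noncrossing matching and keeping every other arc of $w$ (well defined only when it has no crossings). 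Then $a\mapsto\rho(a)$ is a bijection from $\{a\neq(i,i+1)\text{ an arc of }w:\rho(a)\text{ is a web}\}$ onto $\{w':s_i\cdot w'=w'+w,\ w'\neq w\}$, so the quantity to compute is $\#\{a:\rho(a)\text{ is a web and }n(\rho(a))>n(w)\}$.

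\textbf{Step 2 (sign of $n(\rho(a))-n(w)$).} An arc $a\neq(i,i+1)$ of $w$ is either unnested with $(i,i+1)$ or encloses $(i,i+1)$ (it cannot lie under $(i,i+1)$, as there is no room). If $a$ encloses $(i,i+1)$, the noncrossing matching at the four points is the side-by-side one and Lemma~\ref{lemma: changing one nest or unnest} gives $n(\rho(a))<n(w)$; if $a$ is unnested with $(i,i+1)$, the noncrossing matching is the nested one and Lemma~\ref{lemma: changing one nest or unnest} gives $n(\rho(a))>n(w)$. Hence $n(\rho(a))>n(w)$ exactly for arcs $a$ unnested with $(i,i+1)$, and the count becomes $\#\{a\text{ unnested with }(i,i+1):\rho(a)\text{ is a web}\}$.

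\textbf{Step 3 (the crux: when $\rho(a)$ is genuinely a web).} I would show that for $a$ unnested with $(i,i+1)$, the web $\rho(a)$ exists if and only if $a$ has the same umbrella arc as $(i,i+1)$, where ``no umbrella arc'' counts as its own value. By the left--right reflection symmetry of webs we may assume $a=(j,k)$ lies to the left, so $j<k<i<i+1$ and $\rho(a)$ replaces $(j,k)$ and $(i,i+1)$ by the nested pair $(j,i+1)\supset(k,i)$. Using that $w$ is noncrossing and that $(j,k)$, $(i,i+1)$ are arcs of $w$, one checks that $\rho(a)$ fails to be a web exactly when some arc $c$ of $w$ has one endpoint strictly between $k$ and $i$ and its other endpoint outside $[j,i+1]$; every other potential crossing would already cross $(j,k)$ or $(i,i+1)$ in $w$ and is therefore impossible. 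It then remains to see that such a $c$ exists if and only if $a$ and $(i,i+1)$ have different umbrella arcs. If $c$ has its outer endpoint below $j$ it encloses $(j,k)$, forcing the umbrella of $(j,k)$ to have right endpoint below $i$ and hence not to enclose $(i,i+1)$; symmetrically, if $c$ has its outer endpoint above $i+1$ it encloses $(i,i+1)$ and the umbrella of $(i,i+1)$ has left endpoint above $k$ and cannot enclose $(j,k)$; in either case the umbrellas differ (and the configurations in which one of $a$, $(i,i+1)$ has no umbrella are handled identically). Conversely, if the umbrellas differ, the arc witnessing the mismatch---an enclosing arc of one of $(j,k)$, $(i,i+1)$ that fails to enclose the other---has one endpoint strictly between $k$ and $i$ and its other endpoint outside $[j,i+1]$, so it plays the role of $c$; and when $a$ and $(i,i+1)$ share an umbrella arc $U$, any arc chased to lie between $U$ and $(j,k)$, or between $U$ and $(i,i+1)$, would contradict the minimality defining $U$, so no such $c$ exists. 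Combining Steps 1--3 gives the claimed count.

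I expect the main obstacle to be Step 3: it is a finite but fiddly case analysis organized around the positions of arc endpoints relative to the intervals $(j,k)$, $(k,i)$ and $[j,i+1]$, with care needed to treat the cases where $a$, $(i,i+1)$, or both lack an umbrella arc, and to verify that the arcs ``trapped'' between a putative common umbrella and the small arcs are exactly the ones forbidden by noncrossing-ness of $w$. Steps 1 and 2 are routine given the definition of the action and Lemma~\ref{lemma: changing one nest or unnest}.
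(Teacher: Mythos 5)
Your proof is correct and takes essentially the same route as the paper's: both identify the candidate webs $w'$ by re-pairing $(i,i+1)$ with another arc $a$ of $w$ into the nested configuration, and both show this re-pairing yields a noncrossing web exactly when $a$ and $(i,i+1)$ share an umbrella arc, the obstruction being an arc with exactly one endpoint trapped between them. Your write-up is in fact somewhat more careful than the published one---notably in Step 2, where you explicitly dispose of the arcs enclosing $(i,i+1)$ (which re-pair validly but lower the nesting number), a case the paper's proof glosses over.
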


\begin{proof}
Suppose that $(j,k)$ has the same umbrella arc as $(i,i+1)$ in $w$. It follows that either $k<j<i<i+1$ or $i<i+1<k<j$. In other words $(j,k)$ is either to the left of $i$ or to the right of $i+1$.  Define $w'$ to be the web with arcs $(j,i+1)$ and $(k,i)$ that otherwise agrees with $w$.  By construction $s_i \cdot w' = w'+w$ and $n(w')>n(w)$.  

Now suppose that $(j,k)$ does not have the same umbrella arc as $(i,i+1)$ in $w$.  Then there is an arc in $w$ with exactly one endpoint between $(j,k)$ and $(i,i+1)$.  In that case there is no way to change only the arcs incident to the endpoints $j,k,i,i+1$ in $w$ without violating the noncrossing condition, so there is no web $w'$ that differs from $w$ only on the endpoints $j,k,i,i+1$.  This proves the claim.
\end{proof}

\begin{figure}[h]
\begin{tikzpicture}[scale=0.2]
\draw[style=thick] (0,0) to[out=90, in=180] (3,3);
\draw[style=thick](3,3) to[out=0, in=90] (6,0);
\draw[style=thick] (7,0) to[out=90, in=180] (10,3);
\draw[style=thick](10,3) to[out=0, in=90] (13,0);
\draw[fill=gray] (1,0)--(5,0)--(3,2)--(1,0);
\draw[fill=white] (8,0)--(12,0)--(10,2)--(8,0);
\node at (5.8,-.5) {\tiny{$i$}};
\node at (7.6,-.56) {\tiny{$i\!+\!1$}};
\node at (6.8,-3) {{\bf $w$}};
\end{tikzpicture}
 \raisebox{3pt}{$\stackrel{\cdots}{\longrightarrow}$}
\begin{tikzpicture}[scale=0.2]
\draw[style=thick] (0,0) to[out=90, in=180] (6.5,4);
\draw[style=thick](6.5,4) to[out=0, in=90] (13,0);
\draw[style=thick] (1,0) to[out=90, in=180] (4,3);
\draw[style=thick](4,3) to[out=0, in=90] (7.25,0);
\draw[fill=gray] (2,0)--(6,0)--(4,2)--(2,0);
\draw[fill=white] (8,0)--(12,0)--(10,2)--(8,0);
\node at (5.9,-.5) {\tiny{$i$}};
\node at (7.7,-.56) {\tiny{$i\!+\!1$}};
\node at (6.8,-3) {{\bf $w'$}};
\end{tikzpicture}
\hspace{.5in}
\begin{tikzpicture}[scale=0.2]
\draw[style=thick] (0,0) to[out=90, in=180] (3,3);
\draw[style=thick](3,3) to[out=0, in=90] (6,0);
\draw[style=thick] (7,0) to[out=90, in=180] (10,3);
\draw[style=thick](10,3) to[out=0, in=90] (13,0);
\draw[fill=gray] (1,0)--(5,0)--(3,2)--(1,0);
\draw[fill=white] (8,0)--(12,0)--(10,2)--(8,0);
\node at (5.9,-.5) {\tiny{$i$}};
\node at (7.5,-.56) {\tiny{$i\!+\!1$}};
\node at (6.8,-3) {{\bf $w$}};
\end{tikzpicture}
 \raisebox{3pt}{$\stackrel{\cdots}{\longrightarrow}$}
\begin{tikzpicture}[scale=0.2]
\draw[style=thick] (0,0) to[out=90, in=180] (6.5,4);
\draw[style=thick](6.5,4) to[out=0, in=90] (13,0);
\draw[style=thick] (6,0) to[out=90, in=180] (9,3);
\draw[style=thick](9,3) to[out=0, in=90] (12,0);
\draw[fill=gray] (1,0)--(5,0)--(3,2)--(1,0);
\draw[fill=white] (7,0)--(11,0)--(9,2)--(7,0);
\node at (5.9,-.5) {\tiny{$i$}};
\node at (7.7,-.56) {\tiny{$i\!+\!1$}};
\node at (6.8,-3) {{\bf $w'$}};
\end{tikzpicture}
\caption{A path exists from $w$ to $w'$ in $\Gamma^{\mathcal{W}_n}$, so $w\preceq w'$.}\label{figure: expanding umbrella}
\end{figure}
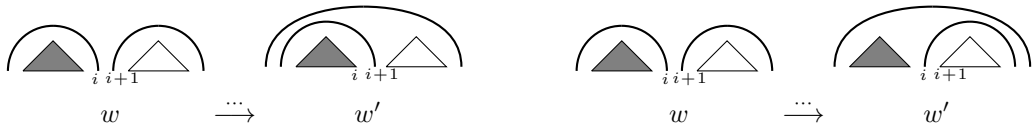

In the next lemma we show that it is always possible to pull an arc over a sub-web diagram, as shown in Figure \ref{figure: expanding umbrella}.

\begin{lemma}\label{arcdrag}
Suppose that $w'$ is obtained from $w$ by expanding one arc to form an umbrella over a larger subdiagram, as in Figure \ref{figure: expanding umbrella}. Then $w\preceq w'$.
\end{lemma}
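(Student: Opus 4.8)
The plan is to reduce Lemma \ref{arcdrag} to the edge-by-edge statement of the web graph. What Figure \ref{figure: expanding umbrella} depicts is the following: $w$ contains an arc $a$ together with a sub-web diagram $D$ (the grey triangle) sitting to one side of $a$, with the endpoints of $a$ adjacent to $D$; in $w'$, the arc $a$ has been stretched so that $D$ is now nested beneath $a$ (the umbrella of the arcs in $D$ has changed from whatever it was in $w$ to $a$, and the arcs between $a$ and $D$ that were previously ``outside'' are now ``inside''). Concretely, say $a=(p,q)$ in $w$ with $D$ occupying the consecutive positions immediately to the right of $q$ (the left-hand picture) or the symmetric situation. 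I would first set up notation carefully for exactly which four endpoints change at each step, matching the hypotheses of Lemma \ref{lemma: changing one nest or unnest} and of the edge definition in $\Gamma^{\mathcal{W}_n}$.

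The key step is to exhibit an explicit directed path $w = w_{(0)} \to w_{(1)} \to \cdots \to w_{(m)} = w'$ in the web graph, where each arrow is a single edge. The natural way to do this is to move the endpoint of $a$ across $D$ one ``innermost arc'' at a time: at each stage, $a$ currently has one endpoint immediately adjacent (on the $D$-side) to some arc $b=(i,i+1)$-type pair — more precisely, $a$ has endpoints $j$ and (say) $j'$ with $j' = i$ for an arc whose partner is $i+1$, forming the \emph{unnested} pair $(j,i)$ and $(i+1,k)$ required by the second bullet in the definition of an edge of $\Gamma^{\mathcal{W}_n}$. Then $s_i$ acts with $s_i \cdot w_{(t)} = w_{(t)} + w_{(t+1)}$, where $w_{(t+1)}$ replaces these by $(j,k)$ and $(i,i+1)$: this is exactly the operation of $a$ absorbing one more arc of $D$ under its umbrella. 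One checks the unnestedness hypothesis holds at each stage because, by construction, at stage $t$ the arc $a$ has already swallowed the arcs of $D$ to its immediate left and the remaining arcs of $D$ lie properly to one side. Iterating until all of $D$ is absorbed produces $w'$. Since each edge is a genuine edge of $\Gamma^{\mathcal{W}_n}$, reachability gives $w \preceq w'$ by definition of the partial order.

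The main obstacle is bookkeeping rather than mathematics: making precise the inductive claim that after absorbing the first $t$ innermost arcs of $D$, the arc $a$ still has an endpoint forming an \emph{unnested} adjacent pair with the next arc to be absorbed, so that the edge of the web graph is actually directed the right way (outward from $w_{(t)}$), and simultaneously tracking that the intermediate webs $w_{(t)}$ remain noncrossing. One clean way to organize this is to induct on the number of arcs in the sub-diagram $D$: in the base case $D$ is empty or a single arc, which is literally one application of the edge definition; in the inductive step, peel off the outermost arc of $D$ (the one nearest $a$), absorb it in one edge-step to reach a web $w_{(1)}$, and observe that $w'$ is obtained from $w_{(1)}$ by the same ``expand one arc over a subdiagram'' move with a strictly smaller subdiagram, so the inductive hypothesis applies and $w_{(1)} \preceq w'$; combined with $w \to w_{(1)}$ this yields $w \preceq w'$. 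Either phrasing works; I would also remark that Lemma \ref{lemma: changing one nest or unnest} and Corollary \ref{corollary: nesting number grows by one} give, as a sanity check, that $n$ strictly increases along the constructed path, consistent with $w \prec w'$ when $D$ is nonempty.
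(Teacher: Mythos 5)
Your second organization --- induct on the number of arcs, absorb the outermost arc of the adjacent subdiagram in a single edge-step, then recurse --- is essentially the paper's proof: the paper's intermediate web $w''$ in Figure \ref{figure: intermediate step in arcdrag} is exactly your $w_{(1)}$, and the inductive hypothesis is then applied to the smaller configuration left under the new umbrella. That version is sound, with the caveats below.

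Your primary description, however --- drag the endpoint of $a$ across $D$ one arc at a time, and ``iterating until all of $D$ is absorbed produces $w'$'' --- does not work, so the claim that ``either phrasing works'' is false. The problem is what a single edge of $\Gamma^{\mathcal{W}_n}$ actually does: replacing the unnested pair $(j,i)$, $(i+1,k)$ by $(j,k)$, $(i,i+1)$ stretches $a$ over the \emph{entire} arc $(i+1,k)$ together with everything beneath it, collapses that arc to the minimal junction arc $(i,i+1)$, and leaves its former contents sitting directly under the stretched $a$ rather than under a shifted copy of $(i+1,k)$. Concretely, take $a=(1,2)$ and $D=\{(3,6),(4,5)\}$; the target is $w'=\{(1,6),(2,5),(3,4)\}$, but the one available absorption step $s_2$ lands on $\{(1,6),(2,3),(4,5)\}$ and the dragging process then terminates, while a further expansion of the junction arc $(2,3)$ over $(4,5)$ is still needed. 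This is why the correct recursive step (yours and the paper's) must re-enter the lemma on the \emph{new} arc $(i,i+1)$ and the absorbed arc's contents, not merely keep pushing the endpoint of $a$. Two smaller points: (1) when the adjacent subdiagram has several outermost arcs \emph{and} the first one absorbed has nonempty contents, a single recursive call does not suffice --- you need one call to expand the junction arc over those contents and another to expand the stretched $a$ over the rest of the subdiagram (harmless for an induction on the total number of arcs, but it should be said; the paper's figure likewise only depicts the single-outermost-arc case); (2) $D$ does not end up ``nested beneath $a$'' unchanged --- since the vertex set is fixed, $D$ shifts by one position toward the old endpoint of $a$ in $w'$, and glossing over that shift is precisely what makes the one-pass dragging argument look as if it terminates at $w'$ when it does not.
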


\begin{proof}
The argument is the same whether we expand the umbrella to the left or to the right, so we consider only the case of expanding to the left as shown on the left in Figure \ref{figure: expanding umbrella}.

We proceed by induction on the number of arcs in the web $w$. If there are exactly two arcs then $w$ has arcs $(1,2)$ and $(3,4)$ while $w'$ has arcs $(1,4)$ and $(2,3)$. In this case $w\stackrel{s_i}{\longrightarrow} w'$ is an edge in the web graph so $w\preceq w'$. 

Assume the claim holds for all webs with $n$ arcs, and let $w, w'$ be webs with $n+1$ arcs arranged as shown on the left of Figure \ref{figure: expanding umbrella}. Consider the intermediate web $w''$ that is identical to $w$ except $w''$ has arcs $(1,2n+2)$ and $(i,i+1)$ whereas $w$ has arcs $(1, i)$ and $(i+1, 2n+2)$ as shown in Figure \ref{figure: intermediate step in arcdrag}.
\begin{figure}[h]
\begin{tikzpicture}[scale=0.2]
\draw[style=thick] (0,0) to[out=90, in=180] (6.5,4);
\draw[style=thick](6.5,4) to[out=0, in=90] (13,0);
\draw[style=thick] (5.75,0) to[out=90, in=180] (6.5,1);
\draw[style=thick](6.5,1) to[out=0, in=90] (7.25,0);
\draw[fill=gray] (1,0)--(5,0)--(3,2)--(1,0);
\draw[fill=white] (8,0)--(12,0)--(10,2)--(8,0);
\node at (5.7,-.5) {\tiny{$i$}};
\node at (7.4,-.55) {\tiny{$i\!+\!1$}};
\node at (7,-3) {{\bf $w''$}};
\end{tikzpicture}
\caption{There is an edge $w\stackrel{s_i}{\longrightarrow} w''$ in $\Gamma^{\mathcal{W}_n}$.}\label{figure: intermediate step in arcdrag}
\end{figure}
There is an edge $w\stackrel{s_i}{\longrightarrow} w''$ in the web graph. Use the inductive hypothesis on the collection of arcs on vertices $2, \ldots, i+1$ to obtain a directed path in the web graph from $w''$ to $w'$. We conclude $w\preceq w'$ as desired.
\end{proof}

We use the previous lemma to show that {\em any} pair of webs that differ in only two arcs have a directed path between them in the web graph.

\begin{theorem}\label{theorem: nesting of arcs implies directed path}
Suppose that $w$ and $w'$ differ only in the arcs incident to vertices $j< j'< k< k'$ and suppose that $w$ contains arcs $(j,j')$ and $(k,k')$ while $w'$ contains arcs $(j,k')$ and $(j',k)$ as shown in Figure \ref{figure: single nesting/unnesting}. Then $w\preceq w'$.
\end{theorem}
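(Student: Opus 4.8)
The plan is to prove the statement by induction on $k-j'$ (equivalently, on the number of arcs of $w$ lying strictly between vertices $j'$ and $k$). Before starting I would record the one structural fact needed. Since $w$ and $w'$ agree away from the four vertices $j,j',k,k'$ and $w'$ is a noncrossing matching containing the arc $(j',k)$, no arc of $w'$ (hence no arc of $w$) can have exactly one endpoint strictly between $j'$ and $k$, since such an arc would cross $(j',k)$ in $w'$; thus the vertices $j'+1,\ldots,k-1$ are matched among themselves in $w$. In particular, when $k-j'\geq 2$ the vertex $j'+1$ is matched in $w$ to some $m$ with $j'+2\leq m\leq k-1$, the outer inequalities again coming from the noncrossing condition (using the arc $(j,j')$ in $w$ and the arc $(j',k)$ in $w'$).

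For the base case $k-j'=1$ we have $j'+1=k$, so the arcs $(j,j')$ and $(j'+1,k')=(k,k')$ of $w$ are an unnested pair with $j'$ a right endpoint and $j'+1$ a left endpoint. Hence there is a directed edge $w\stackrel{s_{j'}}{\longrightarrow}w''$, where $w''$ is determined by $s_{j'}\cdot w=w+w''$; reading off $w''$, it replaces $(j,j')$ and $(k,k')$ by the nested pair $(j,k')$ and $(j',j'+1)=(j',k)$, so $w''=w'$ and $w\preceq w'$. For the inductive step ($k-j'\geq 2$), first apply $s_{j'}$ to $w$: at positions $j'$ and $j'+1$ the arcs of $w$ are the unnested pair $(j,j')$ and $(j'+1,m)$, so there is a directed edge $w\stackrel{s_{j'}}{\longrightarrow}\hat w$, where $\hat w$ replaces these two arcs by $(j,m)$ and $(j',j'+1)$. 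Next let $\tilde w$ be the web obtained from $\hat w$ by replacing the (still unnested) arcs $(j,m)$ and $(k,k')$ by $(j,k')$ and $(m,k)$; using the structural fact above, $\tilde w$ is a well-defined web, and $(\hat w,\tilde w)$ is an instance of the theorem with distinguished vertices $j<m<k<k'$, whose middle sub-web lies strictly between $m$ and $k$. Since $k-m<k-j'$, the inductive hypothesis gives $\hat w\preceq\tilde w$. Finally, unwinding the three substitutions shows that $w'$ is precisely what one gets from $\tilde w$ by replacing $(j',j'+1)$ and $(m,k)$ by $(j',k)$ and $(j'+1,m)$: both $\tilde w$ and $w'$ consist of the arcs of $w$ other than $(j,j')$, $(j'+1,m)$, $(k,k')$, together with $(j,k')$, and they differ only in that $\tilde w$ contains $(j',j'+1)$ and $(m,k)$ where $w'$ contains $(j',k)$ and $(j'+1,m)$. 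Thus $(\tilde w,w')$ is an instance of the theorem with distinguished vertices $j'<j'+1<m<k$, whose middle sub-web has span $m-j'-1<k-j'$, so the inductive hypothesis gives $\tilde w\preceq w'$. Transitivity along $w\preceq\hat w\preceq\tilde w\preceq w'$ finishes the induction.

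The step I expect to demand the most care is this inductive step, specifically its bookkeeping: one must correctly identify the four distinguished vertices and the middle sub-web of each of the two smaller instances, confirm that each is genuinely smaller than the original, and verify that the single edge out of $w$, followed by the two inductively supplied directed paths, reconstitutes $w'$ exactly, never disturbing an arc incident to a vertex outside the four relevant ones. Here the nesting-number identity of Lemma~\ref{lemma: changing one nest or unnest} (and Corollary~\ref{corollary: nesting number grows by one}) serves as a useful sanity check: every directed path from $w$ to $w'$ has length exactly $k-j'$, and the three legs constructed above have lengths $1$, $k-m$, and $m-j'-1$, which indeed sum to $k-j'$.
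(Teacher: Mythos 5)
Your proof is correct, but it is organized differently from the paper's. The paper factors the path as $w\preceq w_1 \stackrel{s_{j'}}{\longrightarrow} w_2 \preceq w'$, where the two outer legs are applications of a separately proved ``arc-dragging'' lemma (Lemma~\ref{arcdrag}): first the left endpoint of $(k,k')$ is pulled leftward over the sub-web sitting between $j'$ and $k$, then after the single nesting edge the short arc $(j',j'+1)$ is expanded rightward into $(j',k)$ over that same sub-web. You instead run a self-contained strong induction on $k-j'$, taking the single edge $s_{j'}$ \emph{first} (pairing $(j,j')$ with the arc $(j'+1,m)$ immediately to its right) and then invoking the theorem itself on two strictly smaller instances, $(\hat w,\tilde w)$ with vertices $j<m<k<k'$ and $(\tilde w,w')$ with vertices $j'<j'+1<m<k$. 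Your structural observation that the vertices strictly between $j'$ and $k$ are matched among themselves is exactly what is needed to make both sub-instances legitimate (in particular to see that $\tilde w$ is noncrossing), and the length check $1+(k-m)+(m-j'-1)=k-j'$ against Lemma~\ref{lemma: changing one nest or unnest} confirms the bookkeeping. What each approach buys: the paper's version isolates the geometrically intuitive dragging move as a reusable lemma, while yours avoids any auxiliary lemma at the cost of a two-branch recursion whose well-foundedness must be (and is) checked explicitly.
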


\begin{proof}
The proof follows from repeated applications of Lemma \ref{arcdrag}. Figure \ref{figure: single nesting/unnesting} shows the key intermediate steps. First apply Lemma \ref{arcdrag} to find a path from $w$ to the web $w_1$ obtained by moving the left endpoint of arc $(k,k')$ over the arcs between $(j,j')$ and $(k,k')$. Next there is an edge labeled $s_{j'}$ from $w_1$ to $w_2$ by construction.  Finally $w'$ is obtained from $w_2$ by expanding the arc $(j',j'+1)$ to form an umbrella over the shaded region indicated in Figure \ref{figure: single nesting/unnesting}, so Lemma \ref{arcdrag} guarantees a directed path from $w_2$ to $w'$.  Thus there is a directed path from $w$ to $w'$ as desired.
\end{proof}
\begin{figure}[h]
\begin{tikzpicture}[baseline=0cm, scale=0.28]
\draw[style=thick] (4,0) to[out=90, in=0] (2,2);
\draw[style=thick] (2,2) to[out=180, in=90] (0,0);
\draw[style=thick] (11,0) to[out=90, in=0] (9,2);
\draw[style=thick] (9,2) to[out=180, in=90] (7,0);
\draw[fill=gray] (4.5,0)--(6.5,0)--(5.5, 1.5)--(4.5,0);
\draw[fill=white] (.5,0)--(3.5,0)--(2,1.5)--(.5,0);
\draw[fill=black] (7.5,0)--(10.5,0)--(9,1.5)--(7.5,0);
\node at (0,-1) {$j$};
\node at (4,-1) {$j'$};
\node at (7,-1) {$k$};
\node at (11,-1) {$k'$};
\node at (5.5,-3) {{\bf $w$}};
\end{tikzpicture}
\raisebox{5pt}{ $\preceq$}
\begin{tikzpicture}[baseline=0cm, scale=0.28]
\draw[style=thick] (4,0) to[out=90, in=0] (2,2);
\draw[style=thick] (2,2) to[out=180, in=90] (0,0);
\draw[style=thick] (11,0) to[out=90, in=0] (8,2);
\draw[style=thick] (8,2) to[out=180, in=90] (5,0);
\draw[fill=gray] (5.5,0)--(7.5,0)--(6.5, 1.5)--(5.5,0);
\draw[fill=white] (.5,0)--(3.5,0)--(2,1.5)--(.5,0);
\draw[fill=black] (7.75,0)--(10.5,0)--(9,1.5)--(7.75,0);
\node at (0,-1) {$j$};
\node at (4,-1) {$j'$};
\node at (7,-1) {$k$};
\node at (11,-1) {$k'$};
\node at (5.5,-3) {{\bf $w_1$}};
\end{tikzpicture}
\raisebox{5pt}{$\stackrel{s_{j'}}{\longrightarrow}$}
\begin{tikzpicture}[baseline=0cm, scale=0.28]
\draw[style=thick] (11,0) to[out=90, in=0] (5.5,3);
\draw[style=thick] (5.5,3) to[out=180, in=90] (0,0);
\draw[style=thick] (5,0) to[out=90, in=0] (4.5,1);
\draw[style=thick] (4.5,1) to[out=180, in=90] (4,0);
\draw[fill=gray] (5.5,0)--(7.5,0)--(6.5, 1.5)--(5.5,0);
\draw[fill=white] (.5,0)--(3.5,0)--(2,1.5)--(.5,0);
\draw[fill=black] (7.75,0)--(10.5,0)--(9,1.5)--(7.75,0);
\node at (0,-1) {$j$};
\node at (4,-1) {$j'$};
\node at (7,-1) {$k$};
\node at (11,-1) {$k'$};
\node at (5.5,-3) {{\bf $w_2$}};
\end{tikzpicture}
\raisebox{5pt}{$\preceq$}
\begin{tikzpicture}[baseline=0cm, scale=0.28]
\draw[style=thick] (11,0) to[out=90, in=0] (5.5,3);
\draw[style=thick] (5.5,3) to[out=180, in=90] (0,0);
\draw[style=thick] (7,0) to[out=90, in=0] (5.5,2);
\draw[style=thick] (5.5,2) to[out=180, in=90] (4,0);
\draw[fill=gray] (4.5,0)--(6.5,0)--(5.5, 1.5)--(4.5,0);
\draw[fill=white] (.5,0)--(3.5,0)--(2,1.5)--(.5,0);
\draw[fill=black] (7.5,0)--(10.5,0)--(9,1.5)--(7.5,0);
\node at (0,-1) {$j$};
\node at (4,-1) {$j'$};
\node at (7,-1) {$k$};
\node at (11,-1) {$k'$};
\node at (5.5,-3) {{\bf $w'$}};
\end{tikzpicture}
\caption{Path between $w$ and $w'$ in the web graph}\label{figure: single nesting/unnesting}
\end{figure}
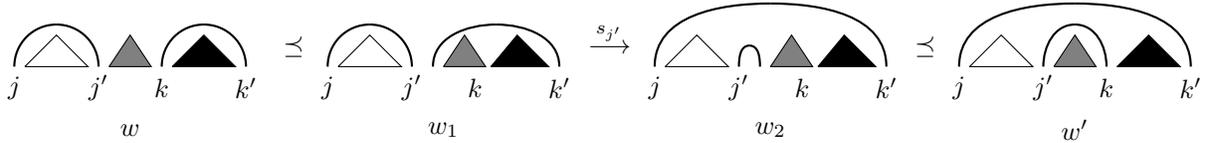

\begin{remark}
Khovanov uses a partial order on the set $\mathcal{W}_n$ to relate knot invariants to the cohomology of $(n,n)$ Springer varieties. His partial order is determined via a directed graph with edges $w \rightarrow w'$ whenever $w$ and $w'$ are related as in Figure \ref{figure:general nesting}, so the edge set in $\Gamma^{\mathcal{W}_n}$ is a subset of the edge set in Khovanov's graph. Theorem \ref{theorem: nesting of arcs implies directed path} implies Khovanov's partial order and the one described here coincide.   Distance in Khovanov's setting is in general shorter because there are more edges in his graph and distance is defined via the shortest {\it undirected} path.
\end{remark}

\section{An equivariant map relating the Specht and web bases}
Let $V^{\mathcal{T}_n}$ and $V^{\mathcal{W}_n}$ be the complex $S_{2n}$ representations generated by complex linear combinations of Specht vectors of shape $(n,n)$ and webs of shape $(n,n)$ respectively. In this section we define a map $\phi: V^{\mathcal{T}_n} \rightarrow V^{\mathcal{W}_n}$ that is equivariant with respect to the actions in Section \ref{section: preliminaries} and then study its properties. We begin with an observation about any such map: it must send the Specht vector $v_{T_0}$ to the web $w_0$. 

\begin{theorem}\label{T0 to W0}
Let $\phi:V^{\mathcal{T}_n} \rightarrow V^{\mathcal{W}_n}$ be a linear map. If $\phi$ is $S_{2n}$-equivariant then $\phi\left(v_{T_0}\right) = aw_0$ for some $a\in \mathbb{C}$.
\end{theorem}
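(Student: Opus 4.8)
The plan is to exploit the fact that $w_0$ is (up to scaling) the unique vector in $V^{\mathcal{W}_n}$ fixed up to sign by all the $s_i$ for odd $i$, matching the corresponding property \eqref{equation: acting on v_{T_0}} of $v_{T_0}$ on the Specht side. First I would recall from \eqref{equation: acting on v_{T_0}} that $s_i \cdot v_{T_0} = -v_{T_0}$ for every odd $i = 1, 3, \ldots, 2n-1$. By equivariance of $\phi$, the image $u := \phi(v_{T_0}) \in V^{\mathcal{W}_n}$ must therefore satisfy $s_i \cdot u = -u$ for every odd $i$. So the whole theorem reduces to the linear-algebra claim: the $(-1)$-eigenspace common to all the operators $s_1, s_3, \ldots, s_{2n-1}$ acting on $V^{\mathcal{W}_n}$ is exactly the line $\mathbb{C} w_0$.

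Next I would write $u = \sum_{w \in \mathcal{W}_n} c_w\, w$ in the web basis and analyze the condition $s_i \cdot u = -u$ web-by-web, using the explicit action: if $(i,i+1)$ is an arc of $w$ then $s_i \cdot w = -w$, and otherwise $s_i \cdot w = w + w'$ where $w'$ is obtained by the resolution described in Section~\ref{section: preliminaries}. The key point is that $w_0$ is the unique web all of whose arcs are of the form $(2k-1, 2k)$, i.e. the unique web containing the arc $(i,i+1)$ for \emph{every} odd $i$; equivalently, it is the unique source of the web graph. For any other web $w$, there is some odd $i$ such that $(i,i+1)$ is not an arc of $w$, so that applying $s_i$ produces a genuine ``$+w'$'' term with $w' \neq w$. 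I would use this to show that the coefficient $c_w$ for $w \neq w_0$ must vanish: working down from the source in the poset structure on $\mathcal{W}_n$ (Pagnon--Ressayre), or by a direct argument that the $w'$-coefficient of $s_i \cdot u + u$ forces $c_w = 0$ once we know the coefficients of webs above/below $w$ in the relevant order. The cleanest route is probably: pick a web $w \neq w_0$ of maximal nesting number among those with $c_w \neq 0$ that still fails to contain some arc $(i,i+1)$, $i$ odd; then apply that $s_i$ and track the unique monomial that cannot be cancelled.

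I expect the main obstacle to be organizing this cancellation argument carefully: when $s_i$ acts on various webs $w$ with $(i,i+1)$ not an arc, the resolved webs $w'$ can coincide for different $w$, so one must make sure the ``uncancellable'' term is genuinely isolated. A convenient way to sidestep bookkeeping is to invoke the isomorphism $\Gamma^{\mathcal{W}_n} \cong \Gamma^{\mathcal{T}_n}$ together with irreducibility: the web representation is irreducible (Section~\ref{section: preliminaries}), and the subgroup generated by $\{s_i : i \text{ odd}\}$ is a Young subgroup $\cong (S_2)^n$, so the $\prod s_i$-isotypic component on which every odd $s_i$ acts by $-1$ is the multiplicity space $\mathrm{Hom}_{(S_2)^n}(\mathrm{sgn}^{\otimes n}, V^{\mathcal{W}_n})$; one computes its dimension to be $1$ either by a branching/character computation or by transporting the question to $V^{\mathcal{T}_n}$ where $v_{T_0}$ visibly spans it and the branching multiplicity of this character is known to be $1$. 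Either way, once the eigenspace is shown to be one-dimensional and $w_0$ lies in it (which is immediate since every $(2k-1,2k)$ is an arc of $w_0$), we conclude $\phi(v_{T_0}) = a w_0$ for some $a \in \mathbb{C}$.
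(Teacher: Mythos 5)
Your reduction to the common $(-1)$-eigenspace of the odd $s_i$ is exactly right, and your first route is essentially the paper's proof; the only difference is that you anticipate a cancellation/bookkeeping problem that does not actually arise. The clean observation (which the paper uses) is that whenever $(i,i+1)\notin w$ the resolution term $w'$ in $s_i\cdot w = w+w'$ always \emph{contains} the arc $(i,i+1)$; hence, for any web $w$ lacking that arc, the coefficient of $w$ in $s_i\cdot u$ receives a contribution only from $s_i\cdot w$ itself and equals $c_w$ exactly. Comparing with $-u$ gives $c_w=-c_w$, so $c_w=0$ for every $w$ missing some arc $(i,i+1)$ with $i$ odd --- no maximality argument or tracking of the $w'$ terms is needed, and the only web surviving all odd $i$ is $w_0$. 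Your second route (restricting to the Young subgroup $(S_2)^n$ and showing the $\mathrm{sgn}^{\otimes n}$-isotypic component of $V^{\mathcal{W}_n}\cong S^{(n,n)}$ has multiplicity one, e.g.\ via the Kostka number $K_{(2^n),(2^n)}=1$) is a correct and genuinely different argument: it buys generality and brevity at the cost of importing branching facts, whereas the paper's argument is self-contained and purely diagrammatic, which fits its combinatorial framing. Either route completes the proof.
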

\begin{proof}
Let $\phi\left(v_{T_0}\right) = \sum_{w\in \mathcal{W}_n} c_w w$ with $c_w\in \mathbb{C}$. Let $i$ be odd and consider $w\in \mathcal{W}_n$. If $(i,i+1)\in w$ then $s_i\cdot c_w w = -c_w w$.  If not then $s_i \cdot w = w+w'$ for some web $w'$ containing the arc $(i,i+1)$. Partitioning $\mathcal{W}_n$ into webs that contain the arc $(i,i+1)$ and webs that do not, we see
\begin{align*}
 s_i \cdot \phi\left(v_{T_0}\right) &=  \sum_{\stackrel{w\in \mathcal{W}_n}{(i,i+1)\in w}} s_i \cdot c_w w  + \sum_{\stackrel{w\in \mathcal{W}_n}{(i,i+1)\notin w}} s_i \cdot c_w w\\
 &= \sum_{\stackrel{w\in \mathcal{W}_n}{(i,i+1)\in w}} -c_w w  + \sum_{\stackrel{w\in \mathcal{W}_n}{(i,i+1)\notin w}} c_w w + \sum_{\stackrel{w\in \mathcal{W}_n}{(i,i+1)\in w}} b_w w
 \end{align*}
 where the third sum in the final step above collects webs with arc $(i,i+1)$ that came from acting on a web without arc $(i,i+1)$. (Note that $b_w$ is one of the coefficients from the middle sum, reindexed.)  Recall from Equation \eqref{equation: acting on v_{T_0}} that $s_i\cdot v_{T_0} = -v_{T_0}$.  Since $\phi$ is equivariant we know that $s_i \cdot \phi\left(v_{T_0}\right) = -\phi\left(v_{T_0}\right)$. Considering the middle sum in the final step above, we conclude that $c_w = 0$ for all $w\in \mathcal{W}_n$ with $(i,i+1)\notin w$. The equation above holds for all odd $i$ and so $c_w = 0$ unless $w$ contains arcs $(1,2), (3,4),\ldots, (2n-1, 2n)$. The only web containing all of these arcs is $w_0$.  We conclude that $\phi\left(v_{T_0}\right) = aw_0$ for some $a\in \mathbb{C}$ as desired.
\end{proof}

This motivates the following definition of a linear map between the Specht module and web space.

\begin{definition}\label{mapdef}
Let $T_0$ be the column-filled tableau of shape $(n,n)$ and $w_0$ the corresponding web. Define the map $\phi: V^{\mathcal{T}_n} \rightarrow V^{\mathcal{W}_n}$ to be $\phi(v_T) = \sigma\cdot w_0 $ where $\sigma = s_{i_1} \cdots s_{i_k}$ labels a directed path from $T_0$ to $T$ in the tableau graph. 
\end{definition}

We first confirm that the map is in fact well-defined.

\begin{lemma}
The map $\phi$ is well-defined. i.e. $\phi(v_T)$ is independent of choice of path $\sigma$ from $T_0$ to $T$.
\end{lemma}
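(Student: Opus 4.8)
The plan is to show that any two directed paths from $T_0$ to $T$ in the tableau graph yield the same web when applied to $w_0$. The key input is that the tableau graph is a directed, acyclic graph that is the Hasse diagram of a poset (Pagnon-Ressayre), together with the ``diamond'' structure from Corollary \ref{diamond}. By Lemma \ref{lemma: min-length path in graph} every directed path from $T_0$ to $T$ has the same length, namely the number of inversions of $\sigma_T$, so it suffices to show that the web $\sigma \cdot w_0$ does not depend on which such path we choose. I will prove this by induction on $\mathrm{dist}(T_0, T)$.

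The base cases $\mathrm{dist}=0$ and $\mathrm{dist}=1$ are immediate since there is a unique path. For the inductive step, suppose $T$ is at distance $k\geq 2$ and let $\sigma = s_{i_1}\cdots s_{i_k}$ and $\tau = s_{j_1}\cdots s_{j_k}$ label two directed paths from $T_0$ to $T$. If $i_k = j_k$ then the penultimate tableau $T' = s_{i_k}\cdot T$ is the same for both paths, and $s_{i_1}\cdots s_{i_{k-1}}$ and $s_{j_1}\cdots s_{j_{k-1}}$ both label directed paths from $T_0$ to $T'$; by the inductive hypothesis they agree on $w_0$, hence so do $\sigma$ and $\tau$ after applying $s_{i_k}$. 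If $i_k \neq j_k$, write $T' = s_{i_k}\cdot T$ and $T'' = s_{j_k}\cdot T$ for the two distinct predecessors of $T$; these are distinct vertices with edges directed into $T$. The crucial step is to produce a common predecessor: I claim there is a tableau $R$ with directed edges $R\stackrel{s_{j_k}}{\longrightarrow} T'$ and $R\stackrel{s_{i_k}}{\longrightarrow} T''$, which is exactly the ``upside-down'' version of the diamond in Corollary \ref{diamond}. Indeed, since $T' \stackrel{s_{i_k}}{\longrightarrow} T$ and $T''\stackrel{s_{j_k}}{\longrightarrow} T$ are edges in the tableau graph, $i_k$ and $j_k$ are in the first row of $T$ with $i_k+1, j_k+1$ in the second row, forcing $|i_k - j_k|>1$ by the same argument as the lemma preceding Corollary \ref{diamond}; then $s_{i_k}$ and $s_{j_k}$ commute and $R := s_{i_k}s_{j_k}\cdot T$ is standard and has the required two outgoing edges. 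Now pick any directed path $\rho$ from $T_0$ to $R$, which has length $k-2$. By the inductive hypothesis applied at $T'$ (distance $k-1$), the paths $\rho$ followed by $s_{j_k}$, and $s_{i_1}\cdots s_{i_{k-1}}$, both from $T_0$ to $T'$, agree on $w_0$; similarly at $T''$, the paths $\rho$ followed by $s_{i_k}$ and $s_{j_1}\cdots s_{j_{k-1}}$ agree on $w_0$. Therefore
\[
\sigma\cdot w_0 = s_{i_k}s_{j_k}\,\rho\cdot w_0 = s_{j_k}s_{i_k}\,\rho\cdot w_0 = \tau\cdot w_0,
\]
using that $s_{i_k}$ and $s_{j_k}$ commute as operators on $V^{\mathcal{W}_n}$ (they satisfy the braid/commutation relations since $V^{\mathcal{W}_n}$ is a genuine $S_{2n}$-representation). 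This closes the induction.

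The main obstacle is the existence of the common predecessor $R$ together with the requirement that the relevant connecting paths actually be \emph{directed} paths of the correct length, so that the inductive hypothesis applies; this is where acyclicity and the fact that all directed $T_0$-to-$T$ paths have length equal to $\#\,\mathrm{inv}(\sigma_T)$ (Lemma \ref{lemma: min-length path in graph}) are essential. Once $R$ is in hand, the argument reduces to the commutation relation $s_{i_k}s_{j_k} = s_{j_k}s_{i_k}$ in the symmetric group together with two applications of the inductive hypothesis.
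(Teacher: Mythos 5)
Your proof is correct, but it takes a genuinely different route from the paper's. The paper's argument is a two-line observation: since $\sigma\cdot T_0=\sigma'\cdot T_0=T$ as tableaux and the symmetric group acts freely on fillings of the diagram, the two words are equal \emph{as permutations} in $S_{2n}$; well-definedness then follows immediately because $V^{\mathcal{W}_n}$ is an honest representation, so equal group elements act identically on $w_0$. You never establish $\sigma=\tau$ in $S_{2n}$; instead you run a confluence induction on $\mathrm{dist}(T_0,T)$, pulling the two last edges back through a common predecessor $R=s_{i_k}s_{j_k}\cdot T$ (the ``upside-down diamond''), and you only ever use the single relation $s_{i_k}s_{j_k}=s_{j_k}s_{i_k}$ for $|i_k-j_k|>1$ acting on $V^{\mathcal{W}_n}$. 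The steps you flag as the obstacles are indeed the ones that need care, and they all check out: $|i_k-j_k|>1$ follows from the dual of the out-edge lemma, $R$ is standard with the two required outgoing edges, $R$ is reachable from the unique source $T_0$, and all directed $T_0$-to-$R$ paths have length $k-2$ by Lemma \ref{lemma: min-length path in graph}. The trade-off: the paper's proof is shorter and leans on the global fact that a unique permutation carries $T_0$ to $T$; yours is longer but purely local, using only the diamond structure and the commutation relations satisfied in the target, so it would survive in settings where one has a presentation by generators and relations rather than a free action on the indexing set. Both are valid.
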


\begin{proof}
Since the tableau graph is connected, there is always a path from $T_0$ to an arbitrary tableau $T$. Say that $\sigma = s_{i_1}\cdots s_{i_k}$ and $\sigma' = s_{j_1} \cdots s_{j_k}$ label two directed paths from $T_0$ to $T$ in the tableau graph. This means that $\sigma\cdot T_0 = \sigma' \cdot T_0 = T$ by definition of the tableau graph.  This means the Specht vectors satisfy $\sigma\cdot v_{T_0} = \sigma' \cdot v_{T_0} = v_T$ by Equation \eqref{equation: acting on Specht vectors}.

The tableau graph is a subgraph of the Bruhat graph and hence $\sigma = \sigma'$ as permutations. Then we have $\sigma \cdot w_0=\sigma' \cdot w_0$ since $V^{\mathcal{W}_n}$ is a representation of $S_{2n}$. We conclude that $\phi(v_{T})$ is independent of path chosen.
\end{proof}

This allows us to prove that the map $\phi$ is equivariant with respect to the $S_{2n}$ actions defined in Section \ref{section: preliminaries}.  In fact this essentially means that $\phi$ is the only equivariant isomorphism between these two representations.

\begin{corollary}\label{corollary: unique equivariant isomorphism}
The map $\phi$ is $S_{2n}$-equivariant and is (up to scaling) the only equivariant isomorphism $V^{\mathcal{T}_n} \rightarrow V^{\mathcal{W}_n}$.
\end{corollary}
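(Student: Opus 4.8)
The plan is to verify the two claims of Corollary \ref{corollary: unique equivariant isomorphism} separately: first that $\phi$ is $S_{2n}$-equivariant, and then that any equivariant isomorphism is a scalar multiple of $\phi$.

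\textbf{Equivariance.} It suffices to check that $\phi(s_i \cdot v_T) = s_i \cdot \phi(v_T)$ for every simple reflection $s_i$ and every standard tableau $T$, since the $s_i$ generate $S_{2n}$ and $\phi$ is linear. Fix such an $s_i$ and $T$, and let $\sigma = s_{i_1} \cdots s_{i_k}$ label a directed path from $T_0$ to $T$ in the tableau graph, so that by definition $\phi(v_T) = \sigma \cdot w_0$ and, by Equation \eqref{equation: acting on Specht vectors}, $v_T = \sigma \cdot v_{T_0}$. I would split into cases according to the behavior of $s_i$ on $T$. If $s_i \cdot T = T'$ is again standard, then $s_i \sigma$ labels a walk from $T_0$ to $T'$ (either lengthening the path by one edge or backtracking along the last edge); in either case $s_i \sigma \cdot v_{T_0} = v_{T'} = s_i \cdot v_T$ by Equation \eqref{equation: acting on Specht vectors}, and since the tableau graph sits inside the Bruhat graph, $s_i \sigma$ as a permutation sends $T_0$ to $T'$, so $\phi(s_i \cdot v_T) = \phi(v_{T'}) = (s_i\sigma) \cdot w_0 = s_i \cdot (\sigma \cdot w_0) = s_i \cdot \phi(v_T)$. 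The remaining case is when $s_i \cdot T \notin \mathcal{T}_n$: then $s_i$ fixes the tabloid structure in a way that forces $s_i \cdot v_T = \pm v_T$ (for instance $s_i \in \mathrm{Col}(T)$ up to sign in the relevant configuration), and I would need to check that $s_i \cdot (\sigma \cdot w_0)$ equals the same scalar times $\sigma \cdot w_0$. The cleanest route here is to observe that, in the web graph, the image web $\psi(T)$ either contains the arc $(i,i+1)$ or has $i, i+1$ as two left endpoints / two right endpoints; translating the condition ``$s_i \cdot T \notin \mathcal{T}_n$'' via Theorem \ref{theorem: graph isomorphism} shows these are exactly the configurations where $s_i$ acts on $\psi(T)$ by a sign, and the sign matches. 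This last case is the main obstacle: one has to match signs and configurations carefully, and the bookkeeping between ``non-standardness of $s_i \cdot T$'' and the web-side action of $s_i$ requires invoking the tableau–web dictionary precisely.

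\textbf{Bijectivity and uniqueness.} Once equivariance is established, $\phi$ is a nonzero $S_{2n}$-module homomorphism between two copies of the irreducible representation indexed by $(n,n)$ (irreducibility of both modules is recalled in Section \ref{section: preliminaries}); it is nonzero because $\phi(v_{T_0}) = w_0 \neq 0$. By Schur's lemma any such homomorphism between irreducibles is either zero or an isomorphism, so $\phi$ is an isomorphism. For uniqueness, suppose $\phi'$ is any equivariant isomorphism $V^{\mathcal{T}_n} \to V^{\mathcal{W}_n}$. By Theorem \ref{T0 to W0}, $\phi'(v_{T_0}) = a w_0$ for some $a \in \mathbb{C}$, and $a \neq 0$ since $\phi'$ is injective. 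Then $\phi' - a\phi$ is an equivariant map killing $v_{T_0}$; since $v_{T_0}$ generates $V^{\mathcal{T}_n}$ as an $S_{2n}$-module (indeed every $v_T$ is obtained from $v_{T_0}$ by a group element, by Equation \eqref{equation: acting on Specht vectors} applied along a path in the connected tableau graph), equivariance forces $\phi' - a\phi = 0$, i.e. $\phi' = a \phi$. This half is routine given the earlier results; the content is all in the case analysis for equivariance.
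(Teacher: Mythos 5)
Your uniqueness argument is sound and essentially matches the paper's, and your handling of the case where $s_i\cdot T$ is again standard is workable, but the case you yourself flag as ``the main obstacle'' contains a genuine error rather than just unfinished bookkeeping. When $s_i\cdot T\notin\mathcal{T}_n$ there are two subcases. If $i$ and $i+1$ lie in the same column of $T$ then indeed $s_i\in Col(T)$ and $s_i\cdot v_T=-v_T$, matching the web side. But if $i$ and $i+1$ lie in the same row of $T$ (necessarily in adjacent cells), then $s_i\cdot v_T$ is \emph{not} $\pm v_T$: it equals $v_{s_i\cdot T}$ for a non-standard tableau, which re-expands via a straightening (Garnir) relation into a combination of several standard basis vectors. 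Concretely for $n=2$, if $T_1$ has first row $1,2$ and second row $3,4$, a direct tabloid computation gives $s_1\cdot v_{T_1}=v_{T_1}-v_{T_0}$. Your proposed web-side matching is also off on two counts: when $i$ and $i+1$ are both left (or both right) endpoints of arcs of $w$, the action is $s_i\cdot w=w+w'$, not multiplication by a sign; and $\phi(v_T)=\sigma\cdot w_0$ is in general a linear combination of many webs, not the single web $\psi(T)$ (in the example, $\phi(v_{T_1})=w_0+w_1$ where $w_1$ is the nested web). So the ``sign matches'' step cannot be carried out as stated, and closing this case honestly would require proving a nontrivial compatibility between the straightening relations in the Specht module and the skein action on webs.

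The paper avoids this entirely with an existence-plus-agreement argument. Since $V^{\mathcal{T}_n}$ and $V^{\mathcal{W}_n}$ are isomorphic irreducible representations, \emph{some} equivariant isomorphism $\Phi$ exists; Theorem \ref{T0 to W0} forces $\Phi(v_{T_0})=aw_0$ with $a\neq 0$; and equivariance applied along a directed path $\sigma$ from $T_0$ to $T$ gives $\Phi(v_T)=\sigma\cdot\Phi(v_{T_0})=a\,(\sigma\cdot w_0)=a\,\phi(v_T)$ on every basis vector, so $\phi=\tfrac{1}{a}\Phi$ is equivariant, and the same computation yields uniqueness. You already have all the ingredients for this (Theorem \ref{T0 to W0}, Equation \eqref{equation: acting on Specht vectors}, well-definedness of $\phi$, and irreducibility); I recommend replacing the direct case analysis with that argument, or else genuinely carrying out the same-row case, which is where all the difficulty lives.
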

\begin{proof}
Observe that $\phi(v_{T_0}) = w_0$. By Theorem \ref{T0 to W0} and the definition of $\phi$ we conclude that any equivariant map on $V^{\mathcal{T}_n}$ agrees with $\phi$ on the basis of $V^{\mathcal{T}_n}$. There is at least one equivariant map between $V^{\mathcal{T}_n}$ and $V^{\mathcal{W}_n}$ because the two representations are isomorphic.  Since a linear map is determined by what it does on a basis, we conclude $\phi$ must be equivariant and thus is (up to scaling) the only equivariant isomorphism between $V^{\mathcal{T}_n}$ and $V^{\mathcal{W}_n}$.
\end{proof}

Let $T'$ be a standard tableau with corresponding web $w'$. Then, using notation similar to the proof of Theorem \ref{T0 to W0}, we can express $\phi(v_{T'})$ as follows:  $$\phi(v_{T'}) = \sum_{w\in \mathcal{W}_n} c_w^{w'} w.$$ Our goal is to describe the coefficients $c_w^{w'}$ coming from the map $\phi$. The following theorem also follows from work of Garsia-McLarnan relating the Kazhdan-Lusztig basis to Young's natural basis \cite[Theorem 5.3]{garsia1988relations}.

\begin{theorem}\label{theorem: upper triangular}
The map $\phi$ is upper-triangular with ones along the diagonal. 
\end{theorem}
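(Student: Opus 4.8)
The plan is to establish the equivalent claim that, for every standard tableau $T'$ with corresponding web $w'=\psi(T')$,
\begin{equation*}
\phi(v_{T'}) = w' + \sum_{w:\ n(w)<n(w')} c_w^{w'}\, w .
\end{equation*}
This is exactly the assertion that the transition matrix is upper-triangular with ones on the diagonal once we order the webs so that the nesting number is nondecreasing; such an ordering exists and refines the poset $\preceq$ because $n(w)$ equals $\mathrm{dist}(w_0,w)$ in the web graph. I would prove the displayed identity by induction on $k=n(w')=\mathrm{dist}(w_0,w')$. The base case $k=0$ is $w'=w_0$, where $\phi(v_{T_0})=w_0$ holds by Definition \ref{mapdef}.

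For the inductive step, recall that the web graph is a directed acyclic graph with unique source $w_0$ (the Pagnon--Ressayre proposition), so a web $w'$ with $k\geq 1$ is the target of an edge $w_T\stackrel{s_i}{\longrightarrow}w'$; since every edge raises the nesting number by exactly one (Corollary \ref{corollary: nesting number grows by one}), we have $\mathrm{dist}(w_0,w_T)=k-1$. Under the graph isomorphism of Theorem \ref{theorem: graph isomorphism} this edge is $T\stackrel{s_i}{\longrightarrow}T'$ in the tableau graph, so $v_{T'}=s_i\cdot v_T$ by \eqref{equation: acting on Specht vectors} and therefore $\phi(v_{T'})=s_i\cdot\phi(v_T)$ by equivariance (Corollary \ref{corollary: unique equivariant isomorphism}). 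Substituting the inductive hypothesis $\phi(v_T)=w_T+\sum_{w:\ n(w)<k-1}c_w^{w_T}w$ and expanding term-by-term via the definition of the $S_{2n}$-action: the leading term gives $s_i\cdot w_T=w_T+w'$ with $n(w')=k$; each remaining term $w$ has $n(w)\leq k-2$, and $s_i\cdot w$ is either $-w$ or $w+w''$ with $n(w'')\leq n(w)+1\leq k-1$ by Corollary \ref{corollary: nesting options across one edge}. Hence every contribution other than the single copy of $w'$ is supported on webs of nesting number at most $k-1$.

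The one remaining point is that $w'$ appears with coefficient exactly $1$, i.e. that no other term of $s_i\cdot\phi(v_T)$ produces $w'$. A term $s_i\cdot w=w+w''$ can involve $w'$ only if $w=w'$ or $w''=w'$; the former is impossible since $n(w)<k=n(w')$, and the latter forces $s_i\cdot w=w+w'$, so Corollary \ref{corollary: nesting options across one edge} gives either $n(w)=n(w')-1=k-1$ or $n(w)>n(w')=k$, both incompatible with $n(w)\leq k-2$. Thus $w'$ arises only from $s_i\cdot w_T$ with coefficient $c_{w_T}^{w_T}=1$, completing the induction. I expect this last no-cancellation step to be the crux: it is exactly where the dichotomy of Corollary \ref{corollary: nesting options across one edge} --- that acting by $s_i$ either raises the nesting number by exactly one or strictly lowers it --- is needed, and it is what prevents lower webs from feeding back into the diagonal entry.
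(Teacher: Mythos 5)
Your proposal is correct and follows essentially the same route as the paper: induction on the nesting number (equivalently, distance from $w_0$), pulling $\phi(v_{T'})$ back along an in-edge $w_T\stackrel{s_i}{\longrightarrow}w'$ via equivariance, and using the dichotomy of Corollary \ref{corollary: nesting options across one edge} to show that all terms other than the one copy of $w'$ produced by $s_i\cdot w_T$ live in strictly smaller nesting number. Your final ``no-cancellation'' paragraph makes explicit a point the paper handles implicitly, but the substance is the same.
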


\begin{proof}
Complete the partial order on webs coming from the web graph to a total order $<$ in a way that also respects nesting numbers, meaning that if $n(\tilde{w})<n(w')$ then the web vectors satisfy $\tilde{w}<w'$.   Consider the matrix for $\phi$ with respect to that ordering of the web basis.  

Now let $T'$ be a standard tableau. It is sufficient to show that for $\phi(v_{T'})$ the following statements hold: 
\begin{enumerate}
\item  If $c_w^{w'} \neq 0$ then $n(w)\leq n(w')$ with $n(w)=n(w')$ if and only if $w=w'$ and 
\item $c_{w'}^{w'} = 1$. 
\end{enumerate}
We proceed via induction on the distance from $T_0$ or equivalently the nesting number of the associated web. The base case is true by definition since $\phi(v_{T_0}) = w_0$.

Assume the result is true for all tableaux of distance less than $k$ from $T_0$ and let $T'$ be a tableau with distance $k$ from $T_0$.  This means there is an edge $T'' \stackrel{s_i}{\longrightarrow} T'$ in the tableau graph from a tableau $T''$ at distance $k-1$ from $T_0$. By the inductive hypothesis $$\phi(v_{T''}) = \sum_{w\in \mathcal{W}_n} c_w^{w''} w$$ with the coefficients $c_w^{w''}$ satisfying properties (1) and (2).  Moreover by definition of $\phi$ we have 
\[\phi(v_{T'})= s_i \cdot \phi(v_{T''}) =  \sum_{w\in \mathcal{W}_n} c_w^{w''} s_i \cdot w.\]

Consider $w\in \mathcal{W}_n$ such that $c_w^{w''}\neq 0$ and $w\neq w''$. Then $n(w)<n(w'')$ by hypothesis. After acting by $s_i$ either $s_i\cdot w = -w$ or $s_i\cdot w = w+\bar{w}$ for some web $\bar{w}$. Corollary \ref{corollary: nesting options across one edge} implies that $\bar{w}$ has nesting number at most $n(w'')$ and hence strictly smaller than $n(w')$.  Thus all webs that come from terms other than $w''$ in $\phi(v_{T''})$ are strictly less than $w'$ in the order coming from the web graph. 

Now consider the action of $s_i$ on $w''$. By assumption $s_i\cdot w'' = w''+w'$. We conclude both that $w'$ is the only web with nesting number $n(w')$ with a nonzero coefficient in $\phi(v_{T'})$ and that its coefficient is $c_{w''}^{w''}=1$ as desired.
\end{proof}

We now refine this result: if $c_w^{w'}\neq 0$ then not only is $w \leq w'$ but in fact $w\preceq w'$ in the partial order defined by the directed web graph.  Our argument will proceed much like the previous result.

We begin with a lemma that, loosely speaking, stacks together the ``diamond lemma" repeatedly (namely Corollary \ref{diamond}).  


\begin{lemma} \label{diamondapp}
Assume $w\preceq w'''$ with $w'''\stackrel{s_i}{\longrightarrow} w'$ and $w\stackrel{s_i}{\longrightarrow} w''$. Then $w''\preceq w'$.
\end{lemma}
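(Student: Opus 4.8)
The plan is to induct on the distance from the source $w_0$ to $w$ in the web graph (equivalently, on the nesting number $n(w)$), leveraging the isomorphism with the tableau graph so that we may invoke the diamond property (Corollary \ref{diamond}) freely. The base case is $w = w_0$: here the only edge labeled $s_i$ out of $w_0$ lands in a fixed web $w''$, and since $w_0 \preceq w'''$ means $w_0$ is below $w'''$, a concrete check using the diamond lemma on the first edge of a path from $w_0$ to $w'''$ gives $w'' \preceq w'$.

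For the inductive step, suppose $w \neq w_0$ and pick a directed edge $u \stackrel{s_j}{\longrightarrow} w$ in the web graph from a web $u$ at distance one less. I would split into two cases according to whether $j = i$ or $j \neq i$. If $j \neq i$: the hypothesis $w \stackrel{s_i}{\longrightarrow} w''$ together with the edge $u \stackrel{s_j}{\longrightarrow} w$ fits the hypotheses of Corollary \ref{diamond} (after transporting to the tableau graph, since $|i-j|>1$ is forced — two edges with labels $s_i,s_j$ incident to the same tableau have $|i-j|>1$), so there is a web $\tilde{u}$ with $u \stackrel{s_i}{\longrightarrow} \tilde{u} \stackrel{s_j}{\longrightarrow} w''$. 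Also $u \preceq w \preceq w'''$, so the inductive hypothesis applied to $u$ (with the same $s_i$) yields $\tilde{u} \preceq w'$, and then $w'' \preceq w'$ would follow once we know $\tilde{u} \stackrel{s_j}{\longrightarrow} w''$ is compatible — but that edge is directed the correct way, so $w'' \preceq w'$ by appending the edge... wait, this needs care: appending an edge after $w'$ does not help. Instead I would route through: $u \preceq w'''$ and $u \stackrel{s_i}{\longrightarrow} \tilde u$, $w''' \stackrel{s_i}{\longrightarrow} w'$, induction gives $\tilde u \preceq w'$; separately $\tilde u \stackrel{s_j}{\longrightarrow} w''$ shows $w''$ sits just below — no. The clean fix is to instead compare $w''$ and $w'$ directly via a second application: since $\tilde u \stackrel{s_j}{\longrightarrow} w''$ and (a path) $\tilde u \preceq w'$, and these share base $\tilde u$, iterate.

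The case $j = i$ is the genuinely delicate one and I expect it to be the main obstacle. Here both $u \stackrel{s_i}{\longrightarrow} w$ and $w \stackrel{s_i}{\longrightarrow} w''$ would be consecutive edges with the same label, which Lemma \ref{consecutive} forbids — so in fact $j = i$ cannot occur, and only the $j \neq i$ analysis is needed. That observation simplifies the structure considerably: every web $w \neq w_0$ has an incoming edge whose label differs from $i$, letting the diamond lemma always apply. The remaining work is bookkeeping to make the chains of $\preceq$ relations compose correctly, i.e. carefully choosing which pair of equal-labeled edges to feed into the inductive hypothesis so that the output $\tilde{u} \preceq w'$ combines with $\tilde{u} \preceq w''$-type information. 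To close this properly I would likely need an auxiliary induction, or equivalently induct on the length of a directed path realizing $w \preceq w'''$ rather than on $n(w)$ alone, applying the single-diamond Corollary \ref{diamond} to the first edge of that path, reducing to a strictly shorter path and invoking the inductive hypothesis. The main obstacle is thus getting the two induction parameters (distance from $w_0$, and length of the path $w \preceq w'''$) to decrease simultaneously in a well-founded way; once that is set up, each step is a direct appeal to Corollary \ref{diamond} and Lemma \ref{consecutive}.
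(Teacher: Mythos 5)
There is a genuine gap in your main line of argument. Corollary \ref{diamond} applies to two edges directed \emph{out of the same vertex}; you try to apply it to the pair consisting of an \emph{incoming} edge $u \stackrel{s_j}{\longrightarrow} w$ and an outgoing edge $w \stackrel{s_i}{\longrightarrow} w''$. For that configuration your claim that $|i-j|>1$ is forced is false: the lemma guaranteeing $|i-j|>1$ is only about two edges leaving the same tableau, while an in-edge and an out-edge at the same vertex can carry adjacent labels. Already in $\Gamma^{\mathcal{T}_3}$ the vertex $T=\young(124,356)$ has an incoming edge labeled $s_2$ from $u=\young(134,256)$ and an outgoing edge labeled $s_3$, and there is no edge labeled $s_3$ out of $u$ at all (in $u$ the entries $3$ and $4$ sit in the same row, so $s_3\cdot u$ is not standard). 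So the web $\tilde u$ with $u \stackrel{s_i}{\longrightarrow} \tilde u \stackrel{s_j}{\longrightarrow} w''$ that your inductive step relies on need not exist, and the induction on $n(w)$ does not close. Your dismissal of the case $j=i$ via Lemma \ref{consecutive} is fine for in-edges, but it does not rescue the $j\neq i$ case, which is where the argument breaks.

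Your closing remark is the right fix, and it is exactly what the paper does: induct on the length $k$ of a directed path $w \stackrel{s_{j_1}}{\longrightarrow} w_1 \longrightarrow \cdots \longrightarrow w'''$ realizing $w\preceq w'''$, and apply Corollary \ref{diamond} to the \emph{first edge of that path} together with $w \stackrel{s_i}{\longrightarrow} w''$ --- both of these leave $w$, so the corollary genuinely applies when $j_1\neq i$, producing $\bar w$ with $w''\stackrel{s_{j_1}}{\longrightarrow}\bar w$ and $w_1\stackrel{s_i}{\longrightarrow}\bar w$; the inductive hypothesis applied to the shorter path from $w_1$ to $w'''$ gives $\bar w\preceq w'$, hence $w''\preceq w'$. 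When $j_1=i$, uniqueness of the $s_i$-edge out of $w$ forces $w''=w_1$, which already lies on the path to $w'''$. In particular there is only one induction parameter (the path length), not two: the ``main obstacle'' you identify of making two parameters decrease simultaneously is not present, and the distance-from-$w_0$ induction should be discarded rather than combined with the path-length one. As written, the proposal does not constitute a proof; carried out along the lines of your final paragraph, it would.
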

\begin{proof}
Let $w$, $w'$, $w''$, and $w'''$ be as in the statement above. Since $w \preceq w'''$ there is a path $$w \stackrel{s_{j_1}}{\longrightarrow} w_1 \stackrel{s_{j_2}}{\longrightarrow} w_2 \stackrel{s_{j_3}}{\longrightarrow} \cdots \stackrel{s_{j_k}}{\longrightarrow} w''' \stackrel{s_i}{\longrightarrow} w'$$ in the web graph.  We prove that there is also a path in the web graph from $w''$ to $w'$ using induction on $k = d(w, w''')$.

If $k=0$ then $w=w'''$. Since edges come from a well-defined symmetric group action on $V^{\mathcal{W}_n}$, there is at most one edge labeled $s_i$ directed from $w=w'''$ in the web graph. Since $w''' \stackrel{s_i}{\longrightarrow} w'$ and $w\stackrel{s_i}{\longrightarrow} w''$ we have $w''=w'$ and so $w''\preceq w'$. 

If $k=1$ then $w \stackrel{s_{j_1}}{\longrightarrow}w'''\stackrel{s_i}{\longrightarrow} w'$.  Lemma \ref{consecutive} showed that consecutive edges in the web graph have different labels. Therefore $j_1 \neq i$ and so the edge $w \stackrel{s_i}{\longrightarrow} w''$ in the web graph is distinct from the edge $w \stackrel{s_{j_1}}{\longrightarrow}w'''$.  Lemma \ref{diamond} now shows that there is an edge $w''\stackrel{s_{j_1}}{\longrightarrow} w'$ and thus $w''\preceq w'$.

Assume $w''\preceq w'$ whenever $w$ is less than distance $k$ from $w'''$ and now suppose that the distance between $w$ and $w'''$ is $k$. The first edge $w \stackrel{s_{j_1}}{\longrightarrow} w_1$ in the path from $w$ to $w'''$ satisfies either $j_1 = i$ or $j_1 \neq i$.  If $j_1 = i$ then $w''=w_1$ and so $w''$ lies on a path from $w$ to $w'''$. Since $w'''$ has an edge to $w'$ we conclude $w''\preceq w'$.

Finally consider the case when $j_1\neq i$. By Corollary \ref{diamond} there exists a web ${\bar{w}}$ in the web graph along with edges $w''\stackrel{s_{j_1}}{\longrightarrow }{\bar{w}}$ and $w_1 \stackrel{s_i}{\longrightarrow}{\bar{w}}$. The distance between $w_1$ and $w'''$ is less than $k$ so we can use the inductive hypothesis to conclude ${\bar{w}}\preceq w'$. Since there is an edge from $w''$ to ${\bar{w}}$ we conclude $w''\preceq w'$.
\end{proof}

This brings us to the main theorem.

\begin{theorem}\label{theorem: upper triangular and respects poset}
If $c_{v}^{w'} \neq 0$ then there is a directed path from $v$ to $w'$ in the web graph, namely $v\preceq w'$.
\end{theorem}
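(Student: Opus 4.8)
The plan is to mimic the inductive structure of the proof of Theorem \ref{theorem: upper triangular}, inducting on the distance of $w'$ from $w_0$ (equivalently on the nesting number $n(w')$), but now tracking the partial order $\preceq$ rather than merely the nesting number. The base case is immediate: since $\phi(v_{T_0}) = w_0$, the only web $v$ with $c_v^{w_0} \neq 0$ is $v = w_0$, and $w_0 \preceq w_0$.

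For the inductive step, suppose $w'$ has distance $k \geq 1$ from $w_0$, so there is an edge $T'' \stackrel{s_i}{\longrightarrow} T'$ in the tableau graph with $T''$ at distance $k-1$; by Theorem \ref{theorem: graph isomorphism} this gives an edge $w'' \stackrel{s_i}{\longrightarrow} w'$ in the web graph, so in particular $s_i \cdot w'' = w'' + w'$ and $w'' \preceq w'$. I would write $\phi(v_{T''}) = \sum_w c_w^{w''} w$, invoke the inductive hypothesis (every $w$ with $c_w^{w''} \neq 0$ satisfies $w \preceq w''$), and expand $\phi(v_{T'}) = s_i \cdot \phi(v_{T''}) = \sum_w c_w^{w''}\, (s_i \cdot w)$. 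Now fix a web $v$ with $c_v^{w'} \neq 0$; then $v$ must occur in $s_i \cdot w$ for some $w$ with $c_w^{w''} \neq 0$, and I would argue according to how $v$ arises. If $v = w$, then $v \preceq w''$ by induction and transitivity with $w'' \preceq w'$ gives $v \preceq w'$. Otherwise $s_i \cdot w = w + v$ with $v \neq w$; if moreover $w = w''$, then comparing with $s_i \cdot w'' = w'' + w'$ and using that the second summand of $s_i \cdot w''$ is uniquely determined by the action forces $v = w'$, so $v \preceq w'$ trivially.

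The remaining case --- $s_i \cdot w = w + v$ with $v \neq w$ and $w \neq w''$ --- is where the two freshly proved tools enter, and is the main obstacle. Here $w \preceq w''$ by induction, and by Corollary \ref{corollary: nesting options across one edge} exactly one of two situations occurs. If the two arcs of $w$ incident to $i$ and $i+1$ are unnested, then $w \stackrel{s_i}{\longrightarrow} v$ is an edge of the web graph; naive transitivity fails here because $v$ lies \emph{above} $w$, so instead I would apply Lemma \ref{diamondapp} to the data $w \preceq w''$, $w'' \stackrel{s_i}{\longrightarrow} w'$, $w \stackrel{s_i}{\longrightarrow} v$ to conclude $v \preceq w'$. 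If instead those arcs of $w$ are nested, then $n(v) < n(w)$ and $v$ is obtained from $w$ by unnesting exactly that pair of arcs; reading off the four endpoints $\{i, i+1, j, j'\}$ involved shows that this is precisely the configuration of Theorem \ref{theorem: nesting of arcs implies directed path}, with $v$ in the role of the less-nested web, so $v \preceq w$, and then $v \preceq w \preceq w'' \preceq w'$. Thus $v \preceq w'$ in every case, completing the induction. The step requiring genuine care is this last one: verifying that the \emph{nested} subcase really does satisfy the hypotheses of Theorem \ref{theorem: nesting of arcs implies directed path}, i.e. that $w$ and $v$ differ only in the arcs on those four vertices and that the arc patterns match the ``$(a,b),(c,d)$ versus $(a,d),(b,c)$'' shape --- which follows once one records that nestedness of the two arcs of $w$ at $i, i+1$ forces the four vertices to appear in the order $j' < j < i < i+1$ or $i < i+1 < j' < j$.
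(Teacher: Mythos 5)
Your proposal is correct and follows essentially the same route as the paper's own proof: the same induction on distance from $w_0$, the same case split according to whether $v$ arises as the old summand or the new summand of $s_i\cdot w$, and the same two tools (Lemma \ref{diamondapp} in the unnested subcase, Theorem \ref{theorem: nesting of arcs implies directed path} in the nested subcase) to close the argument. The only differences are notational, plus your explicit treatment of the subcase $w=w''$, which the paper absorbs into the $k=0$ case of Lemma \ref{diamondapp}.
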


\begin{proof}
The proof inducts on the distance between $w'$ and $w_0$ in the web graph. The base case is when the distance is zero, namely $w_0 \preceq w_0$.  Assume that if $c_v^{w'} \neq 0$ then $v\preceq w'$ for all $w'$ less than distance $k$ from $w_0$. Now assume $w'$ has distance $k$ from $w_0$ and consider any web $w'''$ with an edge $w''' \stackrel{s_i}{\longrightarrow} w'$ in the web graph.  As before we have
\[\phi(v_{T'})= s_i \cdot \phi(v_{T'''}) =  \sum_{\footnotesize \begin{array}{c}w\in  \mathcal{W}_n \\ w \preceq w'''\end{array}} c_w^{w'''} s_i \cdot w = \sum_{\footnotesize \begin{array}{c}w\in  \mathcal{W}_n \\ w \preceq w'''\end{array}} c_w^{w'''} (w + w'')\]
where we take $w''$ to be $-2w$ when $s_i \cdot w = -w$.

The two ways for a web to have nonzero coefficient in $\phi(v_{T'})$ correspond to the terms $w$ and $w''$ in the previous expression: either the coefficient of $w$ gets a contribution of $c_w^{w'''}$ from the first summand in one of the terms or the coefficient of $w''$ gets a contribution of $c_w^{w'''}$ from the second summand in one of the terms. In the first case (where $w$ plays the role of $v$) we have $w \preceq w'''$ by assumption and so since $w''' \longrightarrow w'$ we conclude $w \preceq w'$ as desired.  (If $w''=-2w$ this argument also applies.)

Now we decompose the second case (where $w''$ plays the role of $v$) even further: if $s_i \cdot w = w+w''$ then either $w \stackrel{s_i}{\longrightarrow}w''$ or $d(w, w_0)>d(w'', w_0)$ by Corollary \ref{corollary: nesting options across one edge}.  The first case is Lemma \ref{diamondapp} for which $w'' \preceq w'$ as desired.  

In the final case $w''$ and $w$ differ only in the arcs incident to vertices $j,i,i+1$, and $k$.  By comparing distance (equivalently nesting number) we know that $\{j,i\}$ and $\{k, i+1\}$ are in $w$ with $k<j<i$ or $i+1<k<j$ while $(k,j)$ and $(i,i+1)$ are in $w''$.  Thus $w'' \preceq w$ by Theorem \ref{theorem: nesting of arcs implies directed path}.  We conclude that $w'' \preceq w'$ as desired since $w''\preceq w\preceq w''' \rightarrow w'$.
\end{proof}

We conclude with the following conjecture, which incorporates the two conjectures from the Introduction.

\begin{conjecture}
For every tableau $T'$ with corresponding web $w'$, the vector $\phi(v_{T'})$ has the following form 
$$\phi(v_{T'}) = w' + \sum_{\stackrel{w\in \mathcal{W}_n}{w \prec w'}} c_w^{w'} v_w$$
where $c_w^{w'}>0$ for all $w$ in the sum.
\end{conjecture}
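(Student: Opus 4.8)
The plan is to strengthen the induction already used in the proof of Theorem~\ref{theorem: upper triangular and respects poset} so that it tracks not merely the support but the positivity of the coefficients. I would induct on the distance from $w_0$ in the web graph, with base case $\phi(v_{T_0})=w_0$, which has the stated form with an empty sum. For the inductive step, fix $w'$ at distance $k$ and an edge $w'''\stackrel{s_i}{\longrightarrow}w'$ with $w'''$ at distance $k-1$, so that $\phi(v_{T'})=s_i\cdot\phi(v_{T'''})$ and by hypothesis $\phi(v_{T'''})=w'''+\sum_{w\prec w'''}c_w^{w'''}w$ with every $c_w^{w'''}>0$. Expanding as in the earlier proof, $\phi(v_{T'})=\sum_{w\preceq w'''}c_w^{w'''}(w+w'')$, where $w''$ is determined by $s_i\cdot w=w+w''$ (and $w''=-2w$ if $s_i\cdot w=-w$). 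The leading term: acting on $w'''$ gives $w'''+w'$, contributing $1$ to the coefficient of $w'$, and Theorem~\ref{theorem: upper triangular and respects poset} already guarantees every other web appearing is $\preceq w'$, so the shape $\phi(v_{T'})=w'+\sum_{w\prec w'}c_w^{w'}w$ is automatic; the content of the conjecture is that no cancellation occurs and that every $w\prec w'$ actually appears.

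The positivity half I would handle by splitting contributions exactly as in the existing proof. A web $v\prec w'$ can receive a contribution either as the ``$w$'' part of some term (contributing $+c_w^{w'''}>0$ when $s_i\cdot v=v+(\text{something})$, i.e. $v$ does not contain $(i,i+1)$) or as the ``$w''$'' part (contributing $+c_w^{w'''}>0$ when $s_i\cdot w=w+v$ with $w$ having the unnested or higher-nesting configuration, or $-2c_v^{w'''}$ when $(i,i+1)\in v$ so $s_i\cdot v=-v$). The only sign that is ever negative is the $-2c_v^{w'''}$ term, which occurs precisely when $v$ contains the arc $(i,i+1)$. So the real obstacle, and the heart of the conjecture, is showing that whenever $v$ contains $(i,i+1)$ the negative contribution $-2c_v^{w'''}$ is strictly outweighed by positive contributions to the coefficient of $v$ coming from other terms of $\phi(v_{T'''})$. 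By Lemma~\ref{unnest-below}, the webs $w'$ with $s_i\cdot w'=w'+v$ and $n(w')>n(v)$ are in bijection with arcs of $v$ sharing the umbrella arc of $(i,i+1)$, and the one with $s_i\cdot w'=w'+v$ and $n(w')<n(v)$ comes from the umbrella arc itself; I would need to show enough of these source webs $w'$ lie $\preceq w'''$ and carry large enough coefficients $c_{w'}^{w'''}$ that the sum of their contributions exceeds $2c_v^{w'''}$. This is the step I expect to be genuinely hard, because it requires quantitative control on the inductive coefficients, not just their support — one likely needs an auxiliary monotonicity statement, e.g. that $c_w^{w'}$ is weakly increasing along edges of the web graph in an appropriate sense, proved simultaneously with the main claim.

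For the non-vanishing half (every $w\prec w'$ appears with $c_w^{w'}>0$), I would argue that given $w\prec w'$ there is a directed path and, choosing the last edge $w'''\stackrel{s_i}{\longrightarrow}w'$ appropriately, produce a term of $\phi(v_{T'''})$ whose $s_i$-image contributes positively to the coefficient of $w$; combined with the positivity half (no cancellation) this forces $c_w^{w'}>0$. Concretely, if $w$ itself does not contain $(i,i+1)$ then $w\preceq w'''$ (using Lemma~\ref{diamondapp} / Theorem~\ref{theorem: nesting of arcs implies directed path} to relocate $w$ below $w'''$ as in the main proof) yields a $+c_w^{w'''}$ contribution with $c_w^{w'''}>0$ by induction; if $w$ does contain $(i,i+1)$ one instead realizes $w$ as a ``$w''$'' term from a suitable source. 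The bookkeeping here mirrors the case analysis already carried out for Theorem~\ref{theorem: upper triangular and respects poset}, so I expect it to be routine once the positivity/no-cancellation statement is in hand. In summary: the whole conjecture reduces to an inductive positivity argument whose only delicate point is dominating the single negative term $-2c_v^{w'''}$, and I would attack that via a strengthened inductive hypothesis giving lower bounds on the relevant coefficients $c_{w'}^{w'''}$ attached to the ``unnest-below'' webs of Lemma~\ref{unnest-below}.
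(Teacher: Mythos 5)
This statement is stated in the paper as a \emph{conjecture}; the authors give no proof of it, so there is no argument of theirs to compare yours against. The relevant question is therefore whether your proposal actually establishes the claim, and it does not: it is a proof \emph{plan} that correctly sets up the natural induction (the same induction the paper uses for Theorem~\ref{theorem: upper triangular and respects poset}) and correctly isolates where the difficulty lives, but then explicitly defers that difficulty. The crux you identify --- that when a web $v$ contains the arc $(i,i+1)$ the term $c_v^{w'''}\,s_i\cdot v=-c_v^{w'''}v$ produces a negative contribution that must be dominated by the positive contributions $c_u^{w'''}$ from the source webs $u$ of Lemma~\ref{unnest-below} with $s_i\cdot u=u+v$ --- is exactly the open problem. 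You offer no mechanism for the required lower bound on those $c_u^{w'''}$ (nor for showing enough of those $u$ satisfy $u\preceq w'''$ so that their coefficients are nonzero at all), and the ``auxiliary monotonicity statement'' you invoke is itself unproved and not obviously true. Reducing the conjecture to an unproven quantitative claim about the same coefficients is not a proof.

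Two smaller points. First, your bookkeeping of the negative term is slightly off: the $v$-term contributes $c_v^{w'''}-2c_v^{w'''}=-c_v^{w'''}$ in total, so the other sources need to contribute strictly more than $c_v^{w'''}$, not more than $2c_v^{w'''}$; this does not matter much since the step is open either way. Second, the non-vanishing half has its own gap: given $w\prec w'$ and a chosen last edge $w'''\stackrel{s_i}{\longrightarrow}w'$, it is not automatic that $w\preceq w'''$ (Theorem~\ref{theorem: upper triangular and respects poset} gives the implication from support to $\preceq$, not a description of which elements of the lower set of $w'$ lie below a particular $w'''$), so ``relocating $w$ below $w'''$'' needs an actual argument. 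As it stands, your proposal is a reasonable research program for attacking the conjecture, but it does not prove it.
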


We also conjecture the main theorem of this paper extends to webs for $\mathfrak{sl}_3$ and perhaps all $\mathfrak{sl}_n$.

\bibliographystyle{plain}
\bibliography{references}

\end{document}